\documentclass[twoside,12pt]{amsart}
\usepackage{graphicx}
\usepackage{amssymb}
\usepackage{amsthm}
\usepackage{amsmath}
\textwidth 16cm
\textheight 21.5cm
\oddsidemargin 0.6cm
\evensidemargin 0.6cm
\topmargin 0cm
\newfont{\bbb}{msbm10 scaled\magstep 1}

\DeclareMathOperator{\conv}{conv}

\DeclareMathOperator{\bd}{bd}
\DeclareMathOperator{\lin}{lin}
\newcommand{\Eu}{\mathbb{E}}
\renewcommand{\Re}{\mathbb{R}}

\newcommand{\B}{\mathbf B}
\newcommand{\Sph}{\mathbb{S}}
\newcommand{\X}{\mathbb{X}}
\newcommand{\XX}{\mathfrak{X}_o}

\newcommand{\NS}{(\X,\|\cdot\|)}
\parindent 1cm
\newtheorem{thm}{Theorem}[section]
\newtheorem{cor}{Corollary}[section]
\newtheorem{rem}{Remark}[section]
\newtheorem{ex}{Example}[section]

\newtheorem{ddd}{Definition}[section]
\newtheorem{problem}{Problem}
\newtheorem{ques}{Question}
\newtheorem{pro}{Proposition}[section]
%\date{}

\begin{document}

\title[Semi-inner product spaces]{Semi-inner products and the concept of semi-polarity}
\author[\'A. G.Horv\'ath, Z. L\'angi and M. Spirova]{\'Akos G.Horv\'ath, Zsolt L\'angi and Margarita  Spirova}

\address{\'Akos G.Horv\'ath\\
Dept. of Geometry, Budapest University of Technology,
Egry J\'ozsef u. 1., Budapest, Hungary, 1111}
\email{ghorvath@math.bme.hu}
\address{Zsolt L\'angi\\
Dept. of Geometry, Budapest University of Technology,
Egry J\'ozsef u. 1., Budapest, Hungary, 1111}
\email{zlangi@math.bme.hu}
\address{Margarita Spirova\\
Fakultat f\"ur Mathematik, TU Chemnitz,
D-09107, Chemnitz, Germany}
\email{margarita.spirova@mail.de}

\subjclass[2010]{46B20, 46C50, 52A20, 52A21}
\keywords{antinorm, gauge function, isoperimetrix, Minkowski space, normality, normed space, polarity, semi-inner product, symplectic
bilinear form, support function.}
\thanks{The authors gratefully acknowledge the support of the J\'anos Bolyai Research Scholarship of the Hungarian Academy of Sciences.}

\begin{abstract}
The lack of an inner product structure in Banach spaces yields the motivation to introduce a semi-inner product with a
more general axiom system, one missing the requirement for symmetry, unlike the one determing a Hilbert space.
We use it on a finite dimensional real Banach space $(\X, \| \cdot\|)$ to define and investigate three concepts.
First, we generalize that of \emph{antinorms}, already defined in Minkowski planes, for even dimensional spaces.
Second, we introduce \emph{normality maps}, which in turn leads us to the study of \emph{semi-polarity}, a variant of the notion of polarity, which makes use of the underlying semi-inner product.
%Normality map is also interesting for itself, e.g., for  studying isoperimetrices in higher dimensional normed spaces.
\end{abstract}

\maketitle

\section{Introduction}\label{sec:intro}

Motivated by the lack of inner product in general Banach spaces, Lumer \cite{Lu} defined semi-inner product
spaces, which enabled him to adapt Hilbert space arguments to the theory of Banach spaces.
From the viewpoint of functional analysis, real (and complex) semi-inner product spaces have been in the mainstream of scientific research;
for references in this regard see the book \cite{Dragomir}.
Our aim is to examine them for purely geometric purposes.
We start with some preliminary definitions.

Let $\X$ be a real vector space. A \emph{semi-inner product} on $\X$ is  a real function $[\cdot, \cdot]$ on $\X\times \X$ satisfying the following
properties for any $x,y,z \in \X$.
\begin{enumerate}
\item[(i)] $[x+y, z]=[x, z]+[y, z]$, $[\lambda x, y]=\lambda [x, y]$ for all real $\lambda$,
\item[(ii)] $[x, x]> 0$, when $x\neq 0$,
\item[(iii)] $[x, y]^2\leq [x, x][y, y]$.
\end{enumerate}

A real vector space $\X$, equipped with a semi-inner product, is said to be a (real) \emph{semi-inner product space}.
It is well-known that any semi-inner product $[\cdot, \cdot]$ on $\X$ induces a norm, by setting $\|x\|=\sqrt{[x, x]}$.
Conversely, every \emph{Banach space} $\NS$  can be turned  into a semi-inner product space (see \cite[Theorem 1]{Gi}) in the following way.

Let $\Sph:=\{x\in \X: \|x\|=1\}$ be the \emph{unit sphere} of $\NS$, and $\X^\ast$ be the \emph{dual space of} $\X$. On
$\X^\ast$ one  can define a norm $\|\cdot\|^\ast$,  called the \emph{dual norm}, in the usual way, i.e.,
\begin{equation}\label{1}
\|f\|^\ast:=\sup\{f(x): \|x\|=1\}\;\;\;\text{for}\;\;\;f\in \X^\ast .
\end{equation}
If  $\Sph^\ast$ is the unit sphere of  $(\X^\ast, \|\cdot\|^\ast)$,
then, by the Hahn-Banach Theorem, for any $x\in \Sph$ there exists at least one functional  (exactly one functional if the norm is smooth) $f_x\in
\Sph^\ast$ with $f_x(x)=1$. For any $\lambda x\in\X$, where $x\in \Sph$, we choose $f_{\lambda x}\in\X^\ast$ such that $f_{\lambda x}=\lambda f_x$.
Then a semi-inner product $[\cdot, \cdot]$ is defined on $\X$ by
\begin{equation}
[x, y]:=f_y(x).\label{3}
\end{equation}

The aim of the paper is to investigate three geometric concepts related to semi-inner products.
We collect the main tools of our examination in Section~\ref{sec:prelim}, then, in Section~\ref{sec:antinorms},
we introduce and investigate the \emph{antinorm} of an even dimensional real Banach space by means of a symplectic form defined on the space.
For normed planes, this notion was studied, e.g. in \cite{Ma-Sw2} and \cite{Bus}.
In Section~\ref{sec:normalitymaps}, by means of antinorms, we define and examine \emph{normality maps}.
In Section~\ref{sec:semipolar}, based on the semi-inner product structure of $\X$, we define the notion of \emph{semi-polars} in $\X$
and generalize the properties of polars, known in Euclidean spaces. Finally, in Section~\ref{sec:remarks} we collect our questions and additional remarks.

In functional analysis the polar of a set in a space $\X$ is a subset of the dual space $\X^\ast$ (cf. \cite{Al-To}),
in geometry polarity is regarded as a correspondence between sets of the same Euclidean space,
where linear functionals in $\X^\ast$ are identified with points in $\X$ via the inner product.
In our variant of polarity, we provide a correspondence between subsets of the same normed space,
based on the semi-inner product defined by the norm.

\section{Preliminaries}\label{sec:prelim}

Let $(\X, \| \cdot\|)$ be a \emph{normed  space} (i.e., a finite dimensional real Banach space) with \emph{origin} $o$ and
\emph{unit ball} $\B=\{x\in\X: \|x\|\leq 1\}$, which is a compact, convex subset of $\X$ with boundary $\Sph$, centered at its interior point $o$.
Let $\B_{\Eu}$ and $\Sph_{\Eu}$  be the  unit ball and sphere, respectively, with respect to a \emph{Euclidean norm},
i.e., a norm induced by an inner product on $\X$.
A vector $x\neq 0$ is \emph{normal} to a vector $y\neq 0$, denoted by $x\dashv y$, if, for any real $\lambda$, the  inequality
$\|x\|\leq \|x+\lambda y\|$ holds; see, e.g. \cite[$\S$ 6]{Ma-Sw-Wei}.

For a convex body $K$, i.e., a compact, convex subset of $\X$ with nonempty interior and $u\neq o$, let $h(K, u)$ be the \emph{support function}
in direction $u$. The \emph{support function  of $K$ with respect to the norm } $\|\cdot\|$ is defined by $\displaystyle h_B(K, u)=\frac{h(K,
u)}{h(\B , u)}$. Alternatively, for  every $u\neq o $ this normed support function  $h_B(K, u)$ can be viewed as the signed distance with respect
to $\|\cdot\|$ from the origin $o$ to a supporting hyperplane $H$ of $K$ such that  the outer normal of $H$ with respect to $K$ yields a
positive inner product with $u$; see, e.g. \cite{Cha-Groe} or \cite[$\S$ 2]{Ma-Sw1}. This means that the normed support function $h_B(K, u)$ of
$K$ can be expressed as $\sup\{[x, u]: x\in K\}$.

We denote the family of all convex bodies, containing the origin $o$ as an interior point, by $\XX$. For $K\in\XX$, let $g(K,\cdot)$ be
the \emph{gauge function} of $K$, i.e.,
\[
g(K, x):=\min\{\lambda\geq 0: x\in \lambda K\}\;\;\;\text{for}\;\;\; x\in\X.
\]
Note that $g(\B, x)=\|x\|$ for every $x\in \X$.

From now on, let $\NS$ be a smooth and strictly convex normed space.
We denote by $[\cdot, \cdot]$ the semi-inner product induced by the norm $\|\cdot\|$.
If $\NS$ is an inner product space, i.e., the corresponding semi-inner product is, in addition, symmetric, then we denote this product by $[\cdot, \cdot]_{\Eu}$.   The following properties are proved in \cite{Gi} (see also \cite{Ko}, \cite[$\S$ 2.4]{Ho}, and \cite{La}).

\begin{enumerate}
\item[(iv)] The homogeneity property: $[x, \lambda y]=\lambda[x, y]$ for all $x, y\in \X$ and all real $\lambda$.
\item[(v)] $[y, x]=0 \Longleftrightarrow \|x\|\leq\|x+\lambda y\|$ for all $\lambda\in\Re$.
\item[(vi)] The generalized Riesz-Fischer Representation Theorem: To every linear functional $f\in \X^\ast$ there exists a unique vector
    $y\in \X$ such that $f(x)=[x, y]$ for all $x\in\X$. Then $[x, y]=[x, z]$ for all $x\in\X$ if and only if $y=z$.
\item[(vii)] The dual vector space $\X^\ast$ is a semi-inner product space  by $[f_x, f_y]^\ast=[y, x]$.
\end{enumerate}

\begin{rem}\label{rem3}
Property (v) can be written in the form
\begin{enumerate}
\item[(v \hspace{-1pt}$'$)] $x\neq 0$, $y\neq 0$ and $[y, x]=0\Longleftrightarrow x\dashv y.$
\end{enumerate}
\end{rem}

\begin{rem}\label{rem1}
By Property (vi), we have a one-to-one map $F: \X\rightarrow \X^\ast$ with $F: x \mapsto f_x$, where $f_x$ is determined
by (\ref{3}). Property (vii) implies that $F$ is norm-preserving.
\end{rem}

%\begin{rem}\label{rem2}
%The norm $\|\cdot\|^\ast$ on $\X^\ast$ induced by the semi-inner product $[\cdot, \cdot]^\ast$ is given by
%\begin{equation}\label{4}
%\|f_x\|^\ast=\sqrt{[f_x, f_x]^\ast}=\sqrt{[x, x]}=\|x\|.
%\end{equation}
%\end{rem}

\begin{pro}\label{pro2}
The norm defined by (\ref{1}) is induced by the semi-inner product $[\cdot, \cdot]^\ast$ on $\X^\ast$ defined by (vii).
\end{pro}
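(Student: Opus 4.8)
The plan is to show that the two norms on $\X^\ast$ — namely $\|\cdot\|^\ast$ defined by the supremum formula (\ref{1}) and the norm $f\mapsto\sqrt{[f,f]^\ast}$ induced by the dual semi-inner product — agree on every functional. By Property (vi) (the generalized Riesz--Fischer theorem), every $f\in\X^\ast$ can be written uniquely as $f=f_x$ for some $x\in\X$, with $f_x(z)=[z,x]$ for all $z$. So it suffices to prove that $\|f_x\|^\ast=\|x\|$ for every $x\in\X$, since Remark~\ref{rem2} already records that $\sqrt{[f_x,f_x]^\ast}=\sqrt{[x,x]}=\|x\|$.

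The first step is the inequality $\|f_x\|^\ast\le\|x\|$: for any $z\in\Sph$ we have $f_x(z)=[z,x]$, and the Cauchy--Schwarz-type axiom (iii) gives $|[z,x]|^2\le[z,z][x,x]=\|x\|^2$, hence $f_x(z)\le\|x\|$; taking the supremum over $z\in\Sph$ yields $\|f_x\|^\ast\le\|x\|$. The second step is the reverse inequality, which is where the equality is actually attained: we evaluate $f_x$ at the specific unit vector $x/\|x\|$ (assuming $x\neq o$; the case $x=o$ being trivial). Then $f_x\!\left(\frac{x}{\|x\|}\right)=\frac{1}{\|x\|}[x,x]=\frac{\|x\|^2}{\|x\|}=\|x\|$, so the supremum in (\ref{1}) is at least $\|x\|$. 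Combining the two inequalities gives $\|f_x\|^\ast=\|x\|$, which is exactly what (\ref{4}) asserts, so the induced norm and the dual norm coincide.

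There is no serious obstacle here; the only thing to be careful about is the logical bookkeeping — making sure we are comparing the right two objects. One should note explicitly that $\|f_x\|^\ast$ as defined by (\ref{1}) is a priori a different quantity from the $\|f_x\|^\ast$ appearing in Remark~\ref{rem2}, and the content of the proposition is precisely that these coincide; the proof above does not rely on (\ref{4}) to establish the supremum formula but rather verifies the supremum formula directly and then invokes (\ref{4}) (equivalently Property (vii) together with axiom (ii)) to identify the value $\|x\|$ with $\sqrt{[f_x,f_x]^\ast}$. Alternatively, one can phrase the whole argument without reference to Remark~\ref{rem2} at all: show directly that $\sup\{f_x(z):\|z\|=1\}=\|x\|$ via the two inequalities above, and then observe by (vii) that $\sqrt{[f_x,f_x]^\ast}=\sqrt{[x,x]}=\|x\|$ as well. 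Either way the argument is a one-line application of axiom (iii) together with evaluation at the norming vector.
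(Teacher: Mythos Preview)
Your proof is correct and follows essentially the same approach as the paper's own proof: use axiom (iii) to get the upper bound $\sup\{[y,x]:\|y\|=1\}\le\|x\|$, then evaluate at the unit vector $x/\|x\|$ to get the reverse inequality, and identify the result with $\sqrt{[f_x,f_x]^\ast}$ via (vii). Your additional remarks on the logical bookkeeping are accurate and make the argument slightly more careful than the paper's version, but the mathematical content is identical.
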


\begin{proof}
First, observe that
\begin{equation}\label{4}
\sqrt{[f_x, f_x]^\ast}=\sqrt{[x, x]}=\|x\| ,
\end{equation}
and that $\sup\{[y, o]: \|y\|=1\} = 0 = \| o \|$.

Let $f_x\in\X^\ast$. Then $\sup\{f_x(y): \|y\|=1\}=\sup\{ [y, x]: \|y\|=1\}$. Since $[y, x]^2\leq [y, y]\cdot [x, x]=\|x\|^2$
for all $y$ with $\|y\|=1$, we obtain
\begin{equation}\label{2}
\sup\{f_x(y): \|y\|=1\}\leq \|x\|.
\end{equation}
On the other hand, if $x \neq o$, then $\sup\{[y, x]: \|y\|=1\}\geq \left[ \frac{1}{\|x\|}x, x \right]=\frac{1}{\|x\|}[x, x]=\|x\|$, which, together with (\ref{2}), yields $\sup\{f_x(y): \|y\|=1\}=\|x\|$.
\end{proof}

\begin{pro}\label{pro3}
For the map $F$ and any $x, y\in\X$, $\lambda, \mu\in\Re$, we have
\[
\|F(\lambda x+\mu y)\|^\ast\leq |\lambda|\|F x\|^\ast+|\mu|\|F y\|^\ast.
\]
\end{pro}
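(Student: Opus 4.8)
The plan is to note that the map $F\colon \X\to\X^\ast$, while homogeneous, is in general \emph{not} additive — this is exactly the feature distinguishing the semi-inner product setting from the Hilbert space one — so one cannot simply write $F(\lambda x+\mu y)=\lambda Fx+\mu Fy$ and invoke the triangle inequality in $\X^\ast$. Instead, the only property of $F$ that is needed is that it is norm-preserving. By Remark~\ref{rem2} (equation~\eqref{4}), combined with Proposition~\ref{pro2} identifying the norm induced by $[\cdot,\cdot]^\ast$ with the dual norm $\|\cdot\|^\ast$ of~\eqref{1}, we have $\|Fz\|^\ast=\|z\|$ for every $z\in\X$.

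Applying this identity with $z=\lambda x+\mu y$ shows that the left-hand side of the claimed inequality equals $\|\lambda x+\mu y\|$. Now the triangle inequality and absolute homogeneity of $\|\cdot\|$ give $\|\lambda x+\mu y\|\le\|\lambda x\|+\|\mu y\|=|\lambda|\,\|x\|+|\mu|\,\|y\|$. Finally, applying $\|Fz\|^\ast=\|z\|$ twice more, with $z=x$ and $z=y$, rewrites the right-hand side as $|\lambda|\,\|Fx\|^\ast+|\mu|\,\|Fy\|^\ast$, which is what we wanted.

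I do not expect any genuine obstacle here; the entire content is the observation that the nonlinearity of $F$ is harmless because $F$ preserves norms, so the estimate can be routed through $(\X,\|\cdot\|)$ rather than through $\X^\ast$. The proof is therefore a two-line chain of (in)equalities, and the only thing worth being careful about is not to assume additivity of $F$ at any point.
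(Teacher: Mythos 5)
Your proof is correct and follows essentially the same route as the paper's: both arguments reduce the claim to the triangle inequality in $(\X,\|\cdot\|)$ via the norm-preservation identity $\|Fz\|^\ast=\|z\|$ from Remark~\ref{rem2} and Proposition~\ref{pro2}. Your explicit caution about the non-additivity of $F$ is a sensible remark, but the computation itself matches the paper's line for line.
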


\begin{proof} The definition of $F$ implies
\[
\|F(\lambda x+\mu y)\|^\ast=\|f_{\lambda x+\mu y}\|^\ast=\|\lambda x+\mu y\|\leq \|\lambda
x\|+\|\mu y\|
=|\lambda| \|F x\|^\ast+|\mu|\|F y\|^\ast.
\]
\end{proof}

\section{Antinorms}\label{sec:antinorms}

In this and the next section, we assume that $\X$ is even dimensional. Our main goal is to generalize the notion of antinorm for even dimensional normed spaces, defined in \cite{Ma-Sw2} for normed planes, and examine which of their properties remain true.

Let $\langle \cdot, \cdot\rangle$ be a (nondegenerate) bilinear symplectic form on $\X$; that is, a bilinear form satisfying
$\langle x, y\rangle=-\langle y, x\rangle$ for all $x,y \in \X$, and the property that $\langle x, y\rangle = 0$ for all $y \in \X$ yields that $x=o$.
Then the vector space $\X$ and its dual space $\X^\ast$ can be identified via
\begin{equation}\label{7}
G:\begin{array}{l} \X\rightarrow \X^\ast\\ x \mapsto g_x\end{array},\;\;\; \mathrm{where }\;\;\;g_x(y):=\langle y, x\rangle;
\end{equation}
see \cite[$\S$ 2.3]{Ma-Sw2}. It is easy to see that $G$ is an isomorphism.

We note that if $\dim \X = 2$, then every symplectic form $\langle x, y\rangle$ is a scalar multiple of the $2 \times 2$ determinant, or geometrically, up to multiplication by a constant, is the signed area of the parallelogram with vertices $o, x, x+y, y$.
On the other hand, it is well-known (cf. \cite{Silva} or \cite{arnold}) that for spaces of dimension greater than two, there are many (even though symplectically isomorphic) symplectic forms which are not scalar multiples of one another.

From now on we fix a symplectic form on $\X$.

\begin{ddd}\label{defn:antinorm}
The \emph{antinorm} of $\NS$, with respect to the symplectic form $\langle \cdot, \cdot \rangle$, is
defined, for all $x \in \X$ as
\begin{equation}\label{5}
\|x\|_a:=\|G x \|^\ast=\|g_x\|^\ast=\sup\{\langle y, x\rangle: \|y\|=1\}.
\end{equation}
For this norm, we denote the map defined in Remark~\ref{rem1} by $F_a$, and the unit ball/sphere of the antinorm by $\B_a$ and $\Sph_a$, respectively.
\end{ddd}

We note that, as it can be simply checked, the antinorm is indeed a norm defined on $\X$. Nevertheless, unlike in the plane, the antinorm relies very much on the symplectic form, i.e.  different forms yield different antinorms.

\begin{ex}
Let $\| \cdot \|$ be the $\ell_\infty$-norm on $\Re^{2n}$, and for $x=(x_1,\ldots,x_{2n})$ and $y=(y_1, \ldots,y_{2n})$, let
$\langle x,y \rangle = \sum_{i=1}^n x_i y_{i+n}-y_i x_{i+n}$. Then
$\|x\|_a = \sup\{\langle y, x\rangle:|y_1|, \ldots |y_{2n} | \leq 1 \} = \sum_{i=1}^{2n} |y_i|$, and thus, the antinorm of $x$ is its $\ell_1$-norm.
\end{ex}

The following theorem was proven in \cite{Ma-Sw2} for normed planes.
To formulate it, for any $\phi \in \X^\ast$, we write $x \perp_{\|.\|} \phi$, if $|\phi(x)| = \|\phi\|^\ast$;
that is, if the supporting hyperplane of $\|x\| \B$ at $x$ is a level surface of $\phi$.

\begin{thm}\label{thm:doubleantinorm}
Let $\| \cdot \|_{a,a}$ denote the antinorm of $(\X,\|\cdot\|_a)$ with respect to the symplectic form $\langle \cdot,\cdot, \rangle$,
where $\|\cdot \|_a$ is defined with respect to the same form.
Then, for any $x \in \X$, we have $\| x \|_{a,a} = \| x \|$.
Furthermore $x \perp_{\|.\|} G y$ if, and only if, $y \perp_{\|.\|_a} G x$.
\end{thm}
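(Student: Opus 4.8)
The plan is to unwind the definitions on the dual side and exploit the two isomorphisms $F\colon\X\to\X^\ast$ (the semi-inner product map) and $G\colon\X\to\X^\ast$ (the symplectic identification), together with the duality machinery of Section~\ref{sec:prelim}. First I would record what $\|x\|_a$ and $\|x\|_{a,a}$ mean: by Definition~\ref{defn:antinorm}, $\|x\|_a=\|Gx\|^\ast$, where $\|\cdot\|^\ast$ is the dual norm on $\X^\ast$, which by Proposition~\ref{pro2} equals the norm induced by the dual semi-inner product $[\cdot,\cdot]^\ast$. Applying the definition a second time, $\|x\|_{a,a}=\sup\{\langle y,x\rangle_{a}:\|y\|_a=1\}$, but this is the dual norm of the functional $y\mapsto\langle y,x\rangle$ computed with respect to $\|\cdot\|_a$ rather than $\|\cdot\|$. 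So the crux is to show that the unit ball $\B_a$ of the antinorm is, under the identification $G$, the polar body (with respect to the pairing $\langle\cdot,\cdot\rangle$) of $\B$, and that taking this polar twice returns $\B$ — which follows from the bipolar theorem once one checks that $G$ is a linear isomorphism (already stated below~\eqref{7}) and that $\B$ is a convex body with $o$ in its interior.

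Concretely, I would argue as follows. From \eqref{5}, $\B_a=\{x:\sup_{\|y\|=1}\langle y,x\rangle\le 1\}=\{x:\langle y,x\rangle\le 1\text{ for all }y\in\B\}$, using that $\B$ is balanced-in-the-relevant-sense and that the sup over the sphere equals the sup over the ball. Thus $G(\B_a)=\{g_x:g_x(y)\le 1\ \forall y\in\B\}=\B^\circ$, the polar of $\B$ in $\X^\ast$. Now repeat: $\B_{a,a}$ is the unit ball of $\|\cdot\|_{a,a}$, and by the same computation with $\|\cdot\|_a$ in place of $\|\cdot\|$ and the same form, $G(\B_{a,a})=(\B_a)^{\circ}$ computed via the pairing induced by $G$ on $\X_a$. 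Transporting everything back to $\X$ through $G$, this becomes the bipolar $(\B^\circ)^\circ=\B$ (the Bipolar Theorem applies since $\B$ is compact, convex, and contains $o$ in its interior). Hence $\B_{a,a}=\B$, which is exactly the statement $\|x\|_{a,a}=\|x\|$ for all $x$.

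For the second assertion, I would translate the orthogonality relations into boundary/support language. By the remark preceding Theorem~\ref{thm:doubleantinorm}, $x\perp_{\|.\|}Gy$ means $|g_y(x)|=\|g_y\|^\ast=\|y\|_a$, i.e.\ $|\langle x,y\rangle|=\|x\|\cdot\|y\|_a$; geometrically, the supporting hyperplane of $\|x\|\B$ at $x$ is a level set of $g_y$. Symmetrically, $y\perp_{\|.\|_a}Gx$ means $|\langle y,x\rangle|=\|y\|_a\cdot\|x\|_{a,a}=\|y\|_a\cdot\|x\|$, using the first part of the theorem. Since $\langle x,y\rangle=-\langle y,x\rangle$, the two scalar identities $|\langle x,y\rangle|=\|x\|\,\|y\|_a$ and $|\langle y,x\rangle|=\|y\|_a\,\|x\|$ are literally the same equation, so the equivalence is immediate once the first part is in hand. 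I would phrase this carefully using the case of equality in \eqref{5}: $x\perp_{\|.\|}Gy$ exactly says $x/\|x\|$ attains the supremum defining $\|Gy\|^\ast=\|y\|_a$, and by the symmetry of the pairing (up to sign) this is the same as $y/\|y\|_a$ attaining the supremum defining $\|x\|_{a,a}=\|x\|$.

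The main obstacle I anticipate is purely bookkeeping rather than conceptual: one must be scrupulous about the three copies of the pairing in play — the evaluation pairing $\X\times\X^\ast$, the symplectic form $\langle\cdot,\cdot\rangle$ on $\X$, and the induced form on the ``$a$''-space — and about the sign caused by antisymmetry of $\langle\cdot,\cdot\rangle$, which is why absolute values appear in the definition of $\perp_{\|.\|}$. The one substantive input is the Bipolar Theorem for the pairing given by $G$; since $G$ is a linear isomorphism of finite-dimensional spaces and $\B\in\XX$ (compact, convex, $o$ an interior point), this applies without fuss, and strict convexity/smoothness of $\|\cdot\|$ — assumed from Section~\ref{sec:prelim} on — guarantees that the suprema in \eqref{5} are attained at unique points, making the orthogonality statements clean.
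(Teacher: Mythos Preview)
Your proof is correct, and for the second assertion it is essentially identical to the paper's argument: both translate $x\perp_{\|\cdot\|}Gy$ into the equality case $|\langle x,y\rangle|=\|x\|\,\|y\|_a$, then use the antisymmetry of $\langle\cdot,\cdot\rangle$ together with the first part to obtain the symmetric statement on the antinorm side.

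For the first assertion the two approaches genuinely diverge. The paper argues directly: the inequality $|\langle x,y\rangle|\le\|x\|\,\|y\|_a$, immediate from \eqref{5}, gives $\|x\|_{a,a}\le\|x\|$, and the reverse inequality comes from an explicit witness, namely $y=G^{-1}Fx$ (suitably normalized), for which $\langle x,y\rangle=\|x\|$ and $\|y\|_a=1$. You instead observe that $\B_a$ is the polar of $\B$ under the nondegenerate pairing $\langle\cdot,\cdot\rangle$ (transported to the usual evaluation pairing via the isomorphism $G$), and that $\B_{a,a}$ is therefore the bipolar; since $\B\in\XX$, the Bipolar Theorem returns $\B$. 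Your route is cleaner conceptually and makes the involutive nature of $a$ transparent, at the cost of importing an external result. The paper's route is more elementary and, not incidentally, the witness $G^{-1}Fx$ it produces is precisely the normality map $Jx$ introduced in Section~\ref{sec:normalitymaps}, so its argument doubles as a first appearance of $J$. One small wording slip in your write-up: when you say $|g_y(x)|=\|g_y\|^\ast$, this is literally the paper's (slightly informal) definition only for $\|x\|=1$; the version you actually need and immediately use, $|\langle x,y\rangle|=\|x\|\,\|y\|_a$, is the correct general statement.
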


\begin{proof}
By definition, \[ \|x\|_{a,a}= \sup \{ \langle y,x \rangle : \|y\|_a = 1\}. \] Observe that (\ref{5}) yields that
$|\langle x,y \rangle| \leq \|x\| \cdot \|y\|_a$ for any $x,y \in \X$. Thus, it follows that $\|x\|_{a,a} \leq \|x\|$.
On the other hand, let $y = G^{-1} F x$. By definition, $F x=f_x$ is the linear functional with the property that $f_x(x) = \|x\|$ and
for any $z \in \B$ we have $|f_x(z)| \leq 1$, which yields that $\|f_x\|^\ast = 1$. Thus, if we set $g_y = G y$, then $f_x(z) = g_y(z)= \langle
z,y \rangle \leq 1$ for any $z \in \B$, and $g_y\left( \frac{x}{\|x\|} \right) = 1$, implying that $\|y\|_a = 1$ and $\langle x,y \rangle = \|x\|$.
Hence, by definition, $\|x\|_{a,a} \geq \|x\|$, and the first statement follows.

Now, consider some $x,y \in \X$, and assume that $|\langle x,y \rangle| = \|x\| \cdot \|y\|_a$. By the definition of antinorm, this is
equivalent to saying that the function $|\langle .,y \rangle|$ is maximized on $\|x\| \B$ at $x$. In other words, the supporting hyperplane of
$\|x\| \B$ at $x$ is a level surface of the linear functional $G y=\langle \cdot,y \rangle$. On the other hand, since $\|x\|_{a,a} = \|x\|$ and
$|\langle .,. \rangle|$ is symmetric, we have that $G x=|\langle .,x \rangle|$ is maximized on $\|y\|_a \B_a$ at $y$. Thus, we have the
following. \[ x \perp_{\|.\|} G y \quad \Longleftrightarrow \quad |\langle x,y \rangle| = \|x\| \cdot \|y\|_a \quad \Longleftrightarrow \quad y
\perp_{\|.\|_a} G x. \]
\end{proof}

The  normality relation defined at the beginning of Section~\ref{sec:prelim} is not symmetric.
Nevertheless, it was shown in \cite[$\S$ 3]{Ma-Sw2} that for any normed plane $\NS$, for any $x,y \in \X$, $x$ is normal to $y$ with respect to $\| \cdot \|$
if, and only if, $y$ is normal to $x$ with respect to $\| \cdot \|_a$, which we denote $y\dashv_a x$.
This property cannot be generalized for higher dimensions, in a strong sense, as was shown in \cite{Gru2}. Here we give a shorter proof.

\begin{thm}\label{thm:normalisnottransversal}
Let $\| \cdot \|_1$ and $\| \cdot \|_2$ be two norms defined on the real linear space $\X$, where $\dim \X > 2$.
For $i=1,2$, let $\B_i$ and $\dashv_i$ denote the unit ball and the normality relation of the norm $\| \cdot \|_i$.
Then the relations $x \dashv_1 y$ and $y \dashv_2 x$ are equivalent for all $x,y \in \X$ if, and only if $\B_1$ and $\B_2$ are homothetic ellipsoids.
\end{thm}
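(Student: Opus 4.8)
We prove the two implications of the equivalence separately.

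\emph{The easy direction.} Suppose $\B_1$ and $\B_2$ are homothetic ellipsoids. Since the unit balls are centrally symmetric, the homothety carrying one onto the other must be a dilation centred at $o$, so $\B_2=c\B_1$ for some $c>0$; consequently $\|\cdot\|_1$ and $\|\cdot\|_2=\tfrac1c\|\cdot\|_1$ are Euclidean norms arising from a single inner product $\langle\cdot,\cdot\rangle$ up to the positive factor $c$. As $x\dashv_i y$ only involves an inequality between $\|\cdot\|_i$-norms, the factor $c$ is irrelevant, and for a norm induced by $\langle\cdot,\cdot\rangle$ one has $x\dashv_i y\iff\langle x,y\rangle=0$; since $\langle\cdot,\cdot\rangle$ is symmetric this gives $x\dashv_1 y\iff\langle x,y\rangle=0\iff\langle y,x\rangle=0\iff y\dashv_2 x$.

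\emph{Set-up for the converse.} Now assume $x\dashv_1 y\iff y\dashv_2 x$ for all $x,y\in\X$, and observe that this hypothesis is symmetric in the indices $1$ and $2$ (interchange them and rename $x\leftrightarrow y$), so any property established for $\B_1$ automatically holds for $\B_2$. For $x\neq o$ put
\[
N_i(x):=\{y: x\dashv_i y\}=\bigcup_{f\in\partial\|x\|_i}\ker f,\qquad S_i(x):=\{y: y\dashv_i x\};
\]
thus $N_i(x)$ is a union of hyperplanes through $o$, reducing to a single hyperplane exactly when $\|\cdot\|_i$ is smooth at $x/\|x\|_i$, while $S_i(x)$ is the cone over the shadow boundary of $\B_i$ in the direction $x$, i.e.\ the set of directions $y$ for which some supporting hyperplane of $\B_i$ at $y/\|y\|_i$ is parallel to $x$. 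The hypothesis is precisely the assertion that $N_1(x)=S_2(x)$ and $N_2(x)=S_1(x)$ for all $x\neq o$.

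\emph{Reducing to smooth, strictly convex balls --- the main obstacle.} The crux is to deduce from this that $\B_1$ and $\B_2$ are both smooth and strictly convex. Suppose $\|\cdot\|_1$ is not smooth at some $x_0/\|x_0\|_1$. Then $N_1(x_0)$, hence $S_2(x_0)$, contains two distinct hyperplanes $\ker f_1\neq\ker f_2$ through $o$, with $f_j\in\partial\|x_0\|_1$, each transverse to $\Re x_0$ because $f_j(x_0)=\|x_0\|_1>0$. One then checks that every line parallel to $x_0$ meeting $\ker f_1$ in a point $y$ with $\|y\|_2=1$ also meets $\ker f_2$ in a point $y'$, and, since $y\dashv_2 x_0$ and $y'\dashv_2 x_0$ force $y$ and $y'$ to be $\|\cdot\|_2$-nearest points of $o$ on that line, the whole segment $[y,y']$ lies on $\Sph_2$; this produces a large family of pairwise disjoint nontrivial boundary segments of $\B_2$, all parallel to $x_0$, and a careful analysis of which $\B_2$ can possibly have this --- using $\dim\X>2$, since for planes the assertion of the theorem genuinely fails --- shows this is impossible. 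Hence $\B_1$, and by the index symmetry $\B_2$, is smooth; strict convexity then follows, for if $\B_1$ had a boundary segment $[y_0,y_1]$ of direction $w\neq o$ we would get $[y_0,y_1]\subseteq S_1(w)=N_2(w)=\ker g_w$, a single hyperplane since $\B_2$ is smooth, whence $g_w(w)=0$, contradicting $g_w(w)=\|w\|_2>0$. I expect this reduction, in particular the case analysis just indicated, to be the technically heaviest part of the proof.

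\emph{Conclusion.} Once $\B_1$ and $\B_2$ are smooth and strictly convex, each $N_1(x)$ is a single hyperplane, so $S_2(x)=N_1(x)$ is a hyperplane through $o$ for every $x\neq o$; that is, every shadow boundary of $\B_2$ lies in a hyperplane, and by the classical characterisation of ellipsoids via the planarity of all their shadow boundaries --- valid precisely because $\dim\X>2$ --- the body $\B_2$, and by symmetry $\B_1$, is an ellipsoid. (Alternatively, the projectivised duality maps $\mathbb{P}(\X)\to\mathbb{P}(\X^\ast)$, $[x]\mapsto[f_x^{(i)}]$, carry hyperplanes to hyperplanes by the hypothesis, hence are collineations by the fundamental theorem of projective geometry --- again using $\dim\X>2$ --- and a projective-linear duality map is then easily seen to force $\|\cdot\|_i^2$ to be a positive definite quadratic form.) Finally, with both norms Euclidean, say induced by $\langle\cdot,\cdot\rangle_1$ and $\langle\cdot,\cdot\rangle_2$, the hypothesis reads $\langle x,y\rangle_1=0\iff\langle x,y\rangle_2=0$; diagonalising the pair of forms simultaneously and testing on vectors $e_i\pm e_j$ forces all their relative eigenvalues to coincide, so $\langle\cdot,\cdot\rangle_2$ is a positive multiple of $\langle\cdot,\cdot\rangle_1$, i.e.\ $\B_2$ is a dilate of $\B_1$. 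Hence $\B_1$ and $\B_2$ are homothetic ellipsoids.
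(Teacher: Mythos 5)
Your argument follows the same route as the paper's proof: interpret $\{y : x\dashv_1 y\}$ as (a union of) hyperplane(s) through $o$ and $\{y : y\dashv_2 x\}$ as the cone over the shadow boundary of $\B_2$ in direction $x$, deduce that every shadow boundary of $\B_2$ lies in a hyperplane, invoke Blaschke's characterization of ellipsoids (Theorem 10.2.3 of \cite{Sch}, valid only for $\dim\X>2$), repeat for $\B_1$ by the symmetry of the hypothesis, and finish by showing that two inner products with the same orthogonality relation are proportional. Your closing step (simultaneous diagonalization and testing on $e_i\pm e_j$) is a clean way of filling in what the paper dismisses as ``easy to see,'' and the easy direction is fine.

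The one place where you go beyond the paper is also the one place where your write-up is not a proof: the reduction to smooth unit balls. You correctly observe that without smoothness $N_1(x_0)$ is a union of kernels of supporting functionals rather than a single hyperplane, you correctly produce a family of segments on $\Sph_2$ parallel to $x_0$, but then you assert that ``a careful analysis of which $\B_2$ can possibly have this \dots shows this is impossible'' --- and that assertion is the entire content of the step. It is not obviously cheap: a convex body can carry a large family of parallel boundary segments (a cylinder does), and the cone over a shadow boundary can even be full-dimensional, so ruling out the non-smooth case genuinely requires an argument you have not supplied. (The subsequent strict-convexity paragraph is correct but, as far as I can see, not actually needed once both balls are smooth.) To be fair, the paper does not close this gap either: it simply declares $\{y : x\dashv_1 y\}$ to be a hyperplane, which presupposes smoothness; presumably the authors intend the standing smoothness assumption of Section~\ref{sec:prelim} to apply to both norms. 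So either adopt that hypothesis explicitly, in which case your proof collapses to the paper's and is complete, or supply the missing analysis of the non-smooth case, which you have only promised.
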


\begin{proof}
Clearly, if $\B_1$ and $\B_2$ are homothetic ellipsoids, then $x \dashv_1 y$ and $y \dashv_2 x$ are the same relation.

Observe that the condition $x \dashv_1 y$ geometrically means that $y$ is parallel to the supporting hyperplane of $\| x \|_1 \B_1$ at $x$.
In particular, it follows that the set $\{ y \in \X : x \dashv_1 y \}$ is a hyperplane for every $x \neq o$.
On the other hand, the set $\{ y \in \X : y \dashv_2 x \}$ is the union of the \emph{shadow boundary} of $\lambda \B_2$, $\lambda > 0$,
in the direction of $x$ (for the definition of shadow boundary, cf., e.g. \cite{GHo}).
Thus, if $x \dashv_1 y$ and $y \dashv_2 x$ are equivalent, then for any direction, the shadow boundary of $\B_2$ lies in a hyperplane.
By a result of Blaschke (cf. Theorem 10.2.3 of \cite{Sch}), this implies that $\B_2$ is an ellipsoid, and hence, $x \dashv_2 y$ and $y \dashv_2 x$ are the same relation.
We obtain similarly that $\B_1$ is an ellipsoid, which yields that $x \dashv_1 y$ and $y \dashv_1 x$ are the same relation.
Since it follows that $x \dashv_1 y$ and $x \dashv_2 y$ are the same relation as well, it is easy to see that $\B_1$ and $\B_2$ are homothetic.
\end{proof}

In light of Theorem~\ref{thm:doubleantinorm}, it is reasonable to ask if $\|\cdot \|$ and $\| \cdot\|_a$ can be proportional, or equivalently, equal for some non-Euclidean norm.
For normed planes this question was answered by Busemann \cite{Bus} (cf. also \cite{Ma-Sw2}), who proved that this happens exactly for Radon norms;
i.e. for $2$-dimensional norms in which the normality relation is symmetric.
Whereas for dimensions $n > 2$, normality is symmetric only in Euclidean spaces, Theorem~\ref{pro34} shows that the answer to our question is not so straightforward.

Before stating it, let us recall that a \emph{polar decomposition} of a symplectic form $\langle \cdot, \cdot \rangle$ on $\X$, where $\dim \X = 2n$
is a basis $\{ e_1,e_2,\ldots, e_{2n} \}$ such that $\langle e_i , e_j \rangle = 0$ if $|i-j| \neq n$, and for $i=1,2,\ldots, n$,
$1=\langle e_i,e_{i+n} \rangle = - \langle e_{i+n},e_i \rangle$.
Clearly, in this case, for any $u = \sum_{i=1}^{2n} x_i e_i$ and $v = \sum_{i=1}^{2n} y_i e_i$, their product can be written as
\[
\langle u,v \rangle = \sum_{i=1}^n x_iy_{i+n} - \sum_{i=1}^n y_ix_{i+n} .
\]
Set $U = \lin \{e_1,\ldots, e_n\}$ and $V = \lin \{e_{n+1},\ldots,e_{2n}\}$.
It is known \cite{arnold} that $U$ and $V$ are \emph{Lagrangian subspaces} of $\X$ (i.e. they are their own orthogonal complements), and, furthermore, the polar decompositions of $\X$ can be identified with pairs of transversal Lagrangian subspaces $U$ and $V$ of $\X$.
Thus, for brevity, we may call $\{ U, V \}$ a polar decomposition of $\langle \cdot, \cdot \rangle$ on $\X$.

Now, we say that $\{U,V\}$ is a \emph{Euclidean decomposition} of the norm $\| \cdot \|$, if $[\cdot,\cdot]$ is the direct sum of its restrictions to $U$ and $V$; or in other words, if for any $u \in U$ and $v \in V$, we have $\| u+v \| = \sqrt{\|u\|^2_U + \|v\|^2_V}$.
Geometrically, this condition is equivalent to the requirement that for any $u \in U$ and $v \in V$, the intersection of $\B$ with $\lin \{ u,v \}$ is an ellipse, where $u$ and $v$ belong to a pair of conjugate diameters.
We note that the semi-inner product defined in this way is also a semi-inner product \cite{Ho}, and that this property (and its geometric variant) appeared also in \cite{La}.

\begin{thm}\label{pro34}
Assume that $\{ U, V \}$ is a Euclidean decomposition of $\NS$, where $\dim \X = 2n$.
Let $\langle \cdot, \cdot \rangle $ be a symplectic form on $\X$ with a polar decomposition $\{U,V\}$.
Then the following are equivalent.
\begin{itemize}
\item[(i)] The antinorm $\| \cdot \|_a$, with respect to $\langle \cdot, \cdot \rangle$, is equal to $\| \cdot \|$.
\item[(ii)] We have
\begin{equation}\label{eq:polardecomp}
\B \cap U = \{ x \in U : | \langle x, y \rangle | \leq 1 \hbox{ for every } y \in \B \cap V \},
\end{equation}
\[
\B \cap V = \{ y \in V : | \langle x, y \rangle | \leq 1 \hbox{ for every } x \in \B \cap U \}.
\]
\end{itemize}
\end{thm}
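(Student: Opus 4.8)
The plan is to observe that, under the standing hypotheses, \emph{both} $\|\cdot\|$ and $\|\cdot\|_a$ split in a Pythagorean way along the decomposition $\X=U\oplus V$, and that the antinorm restricted to one of the two Lagrangian subspaces is completely determined by the norm on the other one through the symplectic pairing. Granting this, the equivalence is essentially formal: (i)$\Rightarrow$(ii) will be a mere restriction, and (ii)$\Rightarrow$(i) will follow by adding up the Pythagorean pieces.

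First I would compute the antinorm on $U$ (and, symmetrically, on $V$). Fix $x\in U$ and write a general unit vector $y$ as $y=y_U+y_V$ with $y_U\in U$, $y_V\in V$. Since $\{U,V\}$ is a polar decomposition of $\langle\cdot,\cdot\rangle$, the form vanishes on $U\times U$, so $\langle y,x\rangle=\langle y_V,x\rangle$; and since $\{U,V\}$ is a Euclidean decomposition, the direct-sum property of $[\cdot,\cdot]$ gives $\|y\|^2=\|y_U\|^2+\|y_V\|^2$. Hence, among unit vectors $y$, the value $\langle y_V,x\rangle$ is maximized by taking $y_U=o$ and $\|y_V\|=1$, so that
\[
\|x\|_a=\sup\{\langle z,x\rangle: z\in\B\cap V\}\qquad(x\in U),
\]
and likewise $\|y\|_a=\sup\{\langle z,y\rangle: z\in\B\cap U\}$ for $y\in V$. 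Because $\B\cap V$ is centrally symmetric, $\sup\{\langle z,x\rangle: z\in\B\cap V\}\le 1$ is the same as $|\langle x,z\rangle|\le 1$ for all $z\in\B\cap V$; comparing with (\ref{eq:polardecomp}), its first equation says exactly $\B\cap U=\B_a\cap U$, i.e. $\|x\|=\|x\|_a$ for every $x\in U$, and its second equation says $\|x\|=\|x\|_a$ for every $x\in V$. Thus (ii) is equivalent to: the norm and the antinorm agree on $U$ and on $V$.

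Next I would establish the two Pythagorean identities for $x=x_U+x_V$. The direct-sum property of the semi-inner product gives $\|x\|^2=[x_U,x_U]+[x_V,x_V]=\|x_U\|^2+\|x_V\|^2$ at once. For the antinorm, expand $\langle y,x\rangle=\langle y_U,x_V\rangle+\langle y_V,x_U\rangle$ (the terms $\langle y_U,x_U\rangle$ and $\langle y_V,x_V\rangle$ vanish). For fixed $\|y_U\|=s$, $\|y_V\|=t$ with $s^2+t^2=1$, the two summands are maximized independently over the directions of $y_U$ and $y_V$, and by the previous step the resulting maximum is $s\|x_V\|_a+t\|x_U\|_a$. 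Taking the supremum over $s,t\ge 0$ with $s^2+t^2=1$ and using the Cauchy--Schwarz inequality yields $\|x\|_a^2=\|x_U\|_a^2+\|x_V\|_a^2$.

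Finally, (i)$\Rightarrow$(ii) holds by restriction to $U$ and $V$, and for (ii)$\Rightarrow$(i): if the two norms agree on $U$ and on $V$, then for every $x=x_U+x_V$ the two identities give $\|x\|_a^2=\|x_U\|_a^2+\|x_V\|_a^2=\|x_U\|^2+\|x_V\|^2=\|x\|^2$, which is (i). The only step needing genuine care is the decoupling in the antinorm computation: one must check that, subject to the single constraint $\|y_U\|^2+\|y_V\|^2=1$, optimizing $\langle y_U,x_V\rangle+\langle y_V,x_U\rangle$ really reduces to the two one-variable problems solved in the second paragraph — and this is precisely where the Euclidean (not merely orthogonal-looking) nature of the decomposition enters. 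Everything else is an unwinding of the definitions of antinorm, polar decomposition, and Euclidean decomposition.
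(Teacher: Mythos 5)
Your proof is correct, and it is organized rather differently from the paper's. The paper works in coordinates: it parametrizes the unit sphere as $\{u\cos\phi+v\sin\phi : u\in U\cap\Sph,\ v\in V\cap\Sph\}$ and proves (ii)$\Rightarrow$(i) by a direct trigonometric estimate $|\langle x,y\rangle|\le\cos\alpha\sin\beta+\sin\alpha\cos\beta=\sin(\alpha+\beta)\le 1$ together with an explicit maximizer, and (i)$\Rightarrow$(ii) by a separate argument using the fact that the orthogonal projection of $\Sph$ onto $V$ is $V\cap\B$. You instead isolate two structural lemmas that the paper never states: first, that for $x\in U$ one has $\|x\|_a=\sup\{\langle z,x\rangle: z\in\B\cap V\}$ (so that condition (ii) is literally the statement $\B\cap U=\B_a\cap U$ and $\B\cap V=\B_a\cap V$, i.e.\ that norm and antinorm agree on each Lagrangian factor), and second, that the \emph{antinorm} also splits Pythagoreanly, $\|x\|_a^2=\|x_U\|_a^2+\|x_V\|_a^2$, via the decoupled optimization and Cauchy--Schwarz (which is where the paper's $\sin(\alpha+\beta)$ bound reappears in disguise). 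With these in hand both implications become one-line restrictions or additions, and in particular your (i)$\Rightarrow$(ii) direction avoids the projection argument entirely. Your version buys a cleaner, coordinate-free statement of what is really going on (the antinorm of a direct Euclidean sum is the direct Euclidean sum of the ``crossed'' antinorms), at the cost of having to justify the decoupling step carefully --- which you do correctly, since $y_U$ and $y_V$ vary independently subject only to the magnitude constraint $\|y_U\|^2+\|y_V\|^2\le 1$. I find no gap.
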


Note that if we imagine $U$ and $V$ as orthogonal subspaces, then (ii) states that, identifying $U$ and $V$ via a symplectic basis,
$U \cap \B$ and $V \cap \B$ are polars of each other.

\begin{proof}
We set $U = \lin \{ e_1,e_2,\ldots,e_n \}$, $V=\lin \{ e_{n+1}, \ldots, e_{2n} \}$, where $\langle e_i , e_j \rangle = 0$ if $|i-j| \neq n$,
and for $i=1,2,\ldots, n$, $\langle e_i,e_{i+n} \rangle = 1$.
For simplicity, we imagine this basis as the standard orthonormal basis of an underlying Euclidean space.
Let $K = U \cap \Sph$ and $L = V \cap \Sph$.
By straightforward computation, from the definition of Euclidean decomposition of $\NS$, we obtain that
\[
\Sph = \left\{ u\cos \phi + v \sin \phi : u \in K, v \in L, \hbox{ and } \phi \in [0,2\pi] \right\}.
\]
(We note that the opposite direction also holds, for the idea of the proof see \cite[Lemma 2]{La}.)

First we prove (ii) $\Rightarrow$ (i).

To prove (i) observe that, by the definition of antinorm, we have $\|x\|_a = \sup \{ \langle x,y \rangle : \|y\| = 1 \}$. By homogeneity, it suffices
to show that for our norm, $\|x\| = 1$ yields $\sup \{ \langle x,y \rangle : \|y\| = 1 \} = 1$. In other words, we
need to show that $\langle \Sph,\Sph \rangle = [-1,1]$, and for every $x \in \Sph$, there is some $y \in \Sph$ satisfying $\langle x,y \rangle = 1$.

Consider some $x,y \in \Sph$. Then $x=(x_1 \cos \alpha,\ldots, x_n \cos \alpha, x_{n+1} \sin \alpha,\ldots, x_{2n} \sin \alpha)$ and $y = (y_1
\cos \beta,\ldots , y_n \cos \beta, y_{n+1} \sin \beta,\ldots , y_{2n} \sin \beta)$, where $(x_1,\ldots, x_n), (y_1,\ldots y_n) \in K$,
$(x_{n+1},\ldots x_{2n}), (y_{n+1},\ldots ,y_{2n}) \in L$, and, without loss of generality, $0 \leq \alpha, \beta \leq \frac{\pi}{2}$.
An elementary computation yields that
\[
\langle x,y \rangle = (x_1 y_{n+1} +\ldots +x_n y_{2n}) \cos \alpha \sin \beta - (x_{n+1}y_1+ \ldots +
x_{2n}y_n)\sin \alpha \cos \beta .
\]
By the definitions of $K$ and $L$, we have $| x_1 y_{n+1} +\ldots +x_n y_{2n}| \leq 1$ and
$|x_{n+1}y_1+ \ldots + x_{2n}y_n| \leq 1$. Thus,
\[
|\langle x,y \rangle| \leq \cos \alpha \sin \beta + \sin \alpha \cos \beta = \sin(\alpha+\beta) \leq 1.
\]

On the other hand, consider any $x \in \Sph$.
Then, using the notations of the previous paragraph, we have $(x_1,\ldots,x_n,0,\ldots,0) \in K$ and
$(0,\ldots,0,x_{n+1},\ldots,x_{2n}) \in L$. Thus, by the condition in (ii),
there are some $(y_1,\ldots,y_n,0,\ldots,0) \in K$ and $(0,\ldots,0,y_{n+1},\ldots,y_{2n}) \in L$ satisfying
$\sum_{i=1}^n x_i y_{n+i} = -\sum_{i=1}^n y_i x_{n+i} = 1$.
Now, setting
\[
y=\left( y_1 \cos \left( \frac{\pi}{2} - \alpha \right), \ldots, y_n \cos \left( \frac{\pi}{2} - \alpha \right),y_1 \sin \left( \frac{\pi}{2} - \alpha \right), \ldots, y_n \sin \left( \frac{\pi}{2} - \alpha \right) \right),
\]
we have $\langle x,y \rangle = \sin \frac{\pi}{2} = 1$.

Finally, we prove (i) $\Rightarrow$ (ii).

Assume that $\|\cdot\|=\|\cdot \|_a$ holds, and let $x = (x_1,\ldots,x_{2n})\in \Sph$.
Then, we have
\[
1 = \sup \{ \langle x,y \rangle : y \in \Sph \} = \sup \left\{ \sum_{i=1}^n x_i y_{i+n} : (y_1,\ldots,y_{2n}) \in \Sph \right\} .
\]
Observe that, by the definition of Euclidean decomposition, the orthogonal projection of $\Sph$ onto $V$ is $V \cap \B = \conv L$. Thus,
we have
\[
1 = \sup \left\{ \sum_{i=1}^n x_i y_{i+n} : (0,\ldots,0,y_{n+1},\ldots,y_{2n}) \in L \right\} = \sup \left\{ \langle x,y \rangle : y \in L \right\}.
\]
This yields the first equality in (ii), which readily implies the second inequality as well.
\end{proof}

\begin{cor}\label{cor4}
There are infinitely many non-Euclidean norms coinciding with their antinorms with respect to some symplectic form.
\end{cor}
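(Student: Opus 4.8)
The plan is to read off Corollary~\ref{cor4} from Theorem~\ref{pro34}: it suffices to exhibit, for one fixed symplectic form, infinitely many pairwise distinct non-Euclidean norms $\|\cdot\|$ that admit a Euclidean decomposition $\{U,V\}$ which is simultaneously a polar decomposition of the form and satisfies condition (ii) of that theorem. Fix $n \geq 2$, set $\X = \Re^{2n}$ with the standard symplectic form $\langle u,v \rangle = \sum_{i=1}^n (u_i v_{i+n} - u_{i+n} v_i)$, put $U = \lin \{ e_1,\ldots,e_n \}$ and $V = \lin \{ e_{n+1},\ldots,e_{2n} \}$ (a polar decomposition), and observe that $\langle \cdot,\cdot \rangle$ restricts to the standard pairing between $U$ and $V$ under the coordinate identification $U \cong V^\ast \cong \Re^n$. (The restriction $n \geq 2$ is essential: for $n=1$ the construction below always yields an ellipse.)

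For the construction I would take any origin-symmetric, smooth, strictly convex body $D \subset U$ that is not an ellipsoid, concretely the unit ball of $\ell_p^n$ with $p \in (1,2)$, and let $D^\circ \subset V$ be its polar with respect to $\langle \cdot,\cdot \rangle$, that is, a copy of the $\ell_q^n$-ball with $\frac1p + \frac1q = 1$; both are smooth and strictly convex. Let $\B$ be the \emph{$\ell_2$-sum} of $D$ and $D^\circ$, namely
\[
\B = \{ u + v : u \in U,\ v \in V,\ g(D,u)^2 + g(D^\circ,v)^2 \leq 1 \},
\]
whose boundary is $\{ u\cos\phi + v\sin\phi : u \in \bd D,\ v \in \bd D^\circ,\ \phi \in [0,2\pi] \}$. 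I would then verify the following, all routine and of the type carried out in \cite[Lemma~2]{La} and \cite{Ho}: the set $\B$ is a convex body with $\B \cap U = D$ and $\B \cap V = D^\circ$, which one reads off from the support-function identity $h(\B,\xi) = \sqrt{h(D,\xi_U)^2 + h(D^\circ,\xi_V)^2}$ for the orthogonal splitting $\xi = \xi_U + \xi_V$; the norm $\|\cdot\| := g(\B,\cdot)$ satisfies $\|u+v\|^2 = \|u\|_U^2 + \|v\|_V^2$, so that $\{U,V\}$ is a Euclidean decomposition of $(\X,\|\cdot\|)$ (using that the semi-inner product of a smooth norm is unique, together with the fact, recalled before Theorem~\ref{pro34}, that the direct sum of the two restricted semi-inner products is a semi-inner product); and, since $D$ and $D^\circ$ are smooth and strictly convex, so is $\B$, hence the standing hypotheses of the paper apply.

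With this in hand, condition (ii) of Theorem~\ref{pro34} holds \emph{by construction}, since $\B \cap V = D^\circ = \{ v \in V : |\langle u,v \rangle| \leq 1 \text{ for all } u \in D \}$ and, by the bipolar theorem, $\B \cap U = D = (D^\circ)^\circ = \{ u \in U : |\langle u,v \rangle| \leq 1 \text{ for all } v \in D^\circ \}$; therefore Theorem~\ref{pro34} yields $\|\cdot\|_a = \|\cdot\|$. The norm $\|\cdot\|$ is not Euclidean because its restriction to $U$ is the non-ellipsoidal norm of $\ell_p^n$, and distinct values of $p \in (1,2)$ produce unit balls $\B$ with distinct sections $\B \cap U$, hence pairwise distinct norms. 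This gives infinitely many (indeed, a continuum of) non-Euclidean norms coinciding with their antinorms with respect to the fixed symplectic form.

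The only genuine work is the bundle of verifications in the second paragraph — that the $\ell_2$-sum body $\B$ really has $\{U,V\}$ as a Euclidean decomposition with the prescribed coordinate sections, and that smoothness and strict convexity pass from $D,D^\circ$ to $\B$. None of this is deep: the section identities follow at once from the support-function formula, and smoothness and strict convexity follow by the usual polarity duality from the fact that $h(\B,\cdot)^2$ is a sum of squares of two support functions each of which is continuously differentiable off its own subspace. But it is the one place where care is needed to stay inside the smooth, strictly convex setting in which Theorem~\ref{pro34} was proved.
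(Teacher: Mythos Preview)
Your proposal is correct and is precisely the argument the paper leaves implicit: the corollary is stated without proof, immediately after Theorem~\ref{pro34}, and the intended reading is exactly what you carry out --- build, for each $p\in(1,2)$, the $\ell_2$-sum of the $\ell_p^n$-ball in $U$ and its $\langle\cdot,\cdot\rangle$-polar in $V$, check that $\{U,V\}$ is a Euclidean decomposition satisfying condition (ii), and invoke Theorem~\ref{pro34}. Your remark that $n\geq 2$ is needed (so that $\B\cap U$ can be non-ellipsoidal) and your flagging of the smoothness/strict convexity verification for the $\ell_2$-sum are the only points requiring care, and both are handled correctly.
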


\begin{cor}\label{cor5}
Assume that in Theorem~\ref{pro34}, $\B \cap U$ and $\B \cap V$ are ellipsoids, and that the polar decomposition defined by $U, V$ is an orthogonal basis of an underlying Euclidean space. Let $\B^\ast$ denote the (Euclidean) polar of $\B$ in this space (cf. (\ref{eq:polarinX})), defining the norm $\| \cdot \|_{\ast}$ and antinorm $\| \cdot \|_{\ast,a}$.
Then we have
\[
(\ref{eq:polardecomp}) \Leftrightarrow \|\cdot\|=\|\cdot\|_a \Leftrightarrow \|\cdot\|_{\ast}=\|\cdot\|_{\ast,a}.
\]
\end{cor}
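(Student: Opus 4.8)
The first equivalence is exactly Theorem~\ref{pro34}, applied under the strengthened hypotheses of the corollary; so it suffices to prove that $\|\cdot\|=\|\cdot\|_a$ holds if, and only if, $\|\cdot\|_\ast=\|\cdot\|_{\ast,a}$. My plan is to identify the antinorm ball directly in terms of $\B^\ast$. Write $e_1,\dots,e_{2n}$ for the symplectic basis underlying the polar decomposition $\{U,V\}$; by hypothesis it is orthonormal for the fixed Euclidean scalar product $\langle\cdot,\cdot\rangle_{\Eu}$ on $\X$ in which $\B^\ast$ is formed. Let $J\colon\X\to\X$ be the linear map with $Je_i=e_{i+n}$ and $Je_{i+n}=-e_i$; it is orthogonal and satisfies $J^2=-\Id$. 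A one-line computation in coordinates gives $\langle y,x\rangle=\langle y,J^{-1}x\rangle_{\Eu}$, so from (\ref{5}), together with the fact that the $\langle\cdot,\cdot\rangle_{\Eu}$-support function of a convex body equals the gauge function of its $\langle\cdot,\cdot\rangle_{\Eu}$-polar, we get $\|x\|_a=g(\B^\ast,J^{-1}x)$; that is, $\B_a=J\B^\ast$. Applying the same identity to $\|\cdot\|_\ast$ and using $(\B^\ast)^\ast=\B$ (bipolarity, valid since $\B\in\XX$) gives $(\B^\ast)_a=J\B$.

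The equivalence is then pure bookkeeping: $\|\cdot\|=\|\cdot\|_a$ means $\B=\B_a=J\B^\ast$, i.e. $\B^\ast=J^{-1}\B$; since $J^{-1}=-J$ and $\B=-\B$, this says $\B^\ast=J\B=(\B^\ast)_a$, which is exactly $\|\cdot\|_\ast=\|\cdot\|_{\ast,a}$. I note that this route uses only the orthonormality of the symplectic basis. The hypothesis that $\B\cap U$ and $\B\cap V$ are ellipsoids is what makes available an alternative, purely Theorem~\ref{pro34}-based argument: it forces $\|\cdot\|^2$ to be a positive definite quadratic form whose matrix is, in the orthonormal coordinates, block diagonal with respect to $\X=U\oplus V$ (using $\|p+q\|^2=\|p\|^2+\|q\|^2$ for $p\in U$, $q\in V$), hence the matrix of $\|\cdot\|_\ast^2$ is its inverse, again block diagonal, so $\{U,V\}$ is still a Euclidean decomposition of $\|\cdot\|_\ast$ and (trivially) a polar decomposition of the unchanged symplectic form. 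Theorem~\ref{pro34} then applies to $(\X,\|\cdot\|_\ast)$ and reduces $\|\cdot\|_\ast=\|\cdot\|_{\ast,a}$ to (\ref{eq:polardecomp}) with $\B$ replaced by $\B^\ast$; one finishes by observing that, since $\pi_U\B=\B\cap U$, one has $\B^\ast\cap U=(\B\cap U)^\ast$ (polar taken inside $U$) and likewise for $V$, so that after identifying $U$ and $V$ with $\Re^n$ via the orthonormal symplectic basis both versions of (\ref{eq:polardecomp}) are equivalent to the single assertion $\B\cap U=(\B\cap V)^\ast$.

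I expect the only subtle points to be small ones: in the first route, correctly tracking $J^{-1}$ versus $J$, which is legitimate only because of the central symmetry of $\B$; and, should one prefer the second route, the claim that the block-diagonal structure of the norm's quadratic form is inherited by the polar, which is exactly where the ellipsoid hypothesis on $\B\cap U$, $\B\cap V$ and the orthogonality of the symplectic basis are indispensable.
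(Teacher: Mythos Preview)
Your proposal is correct. Your first route is in fact a genuinely different argument from the paper's. The paper works entirely through the ellipsoid hypothesis: it observes that under the Euclidean-decomposition assumption together with $\B\cap U$, $\B\cap V$ being ellipsoids, $\B$ itself is an ellipsoid; then, assuming $\|\cdot\|=\|\cdot\|_a$ (equivalently~(\ref{eq:polardecomp})), it reads off the semi-axes of $\B$ as $a_1,\dots,a_n,\tfrac{1}{a_1},\dots,\tfrac{1}{a_n}$ along the basis vectors, so that $\B^\ast$ has semi-axes $\tfrac{1}{a_1},\dots,\tfrac{1}{a_n},a_1,\dots,a_n$ and again satisfies~(\ref{eq:polardecomp}); the converse then comes from $(\B^\ast)^\ast=\B$. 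That is essentially a concrete, coordinate version of what you call your ``second route''.

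Your first route, via the identity $\B_a=J\B^\ast$ (equivalently $(\B^\ast)_a=J\B$) obtained from $\langle y,x\rangle=\langle y,J^{-1}x\rangle_{\Eu}$, is cleaner and, as you correctly note, uses only that the symplectic basis is orthonormal; the ellipsoid hypothesis on $\B\cap U$, $\B\cap V$ is not needed there at all. What it buys is both generality and a structural explanation of the equivalence: $\|\cdot\|=\|\cdot\|_a$ and $\|\cdot\|_\ast=\|\cdot\|_{\ast,a}$ are literally the same equation $\B^\ast=J\B$ once one uses $J^{-1}=-J$ and $-\B=\B$. The paper's approach, by contrast, makes the corollary a direct application of Theorem~\ref{pro34} and exhibits the explicit semi-axis picture, at the cost of the extra ellipsoid hypothesis.
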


\begin{proof}
Clearly, it suffices to prove the second equivalence. Let the polar basis of $\langle \cdot, \cdot \rangle$ be $\{ e_1,\ldots,e_{2n} \}$, and assume that
$\|\cdot\|=\|\cdot\|_a$. This, by the definition of polar decomposition, yields that
the semi-axes of $\B$ are $a_1, \ldots, a_n, \frac{1}{a_1}, \ldots, \frac{1}{a_n}$, in the directions of the corresponding basis vectors, respectively.
On the other hand, $\B^\ast$ is also an ellipsoid, with semi-axes $\frac{1}{a_1}, \ldots, \frac{1}{a_n}, a_1, \ldots, a_n$ in the same directions, respectively,
and thus, it satisfies the conditions in (\ref{eq:polardecomp}). The opposite direction follows from $(\B^\ast)^\ast = \B$.
\end{proof}

Note that if $\|\cdot\|_1$ and $\|\cdot\|_2$ are norms on $\X$, then for any $1 \leq p \leq \infty$,
$\| \cdot \| = \left(\|\cdot\|_1^p+ \|\cdot\|_2^p\right)^{\frac{1}{p}}$ is a norm as well.
In the following, we examine the relation between antinorm and this operation.
We remark that for $p=1$, using the identities between the support and the gauge/radial functions (cf. \cite{gardner}),
we have that  the unit ball $\B$ of $\| \cdot \|$ is the convex body $(\B_1^{\ast}+\B_2^{\ast})^\ast$, where $\B_1$ and $\B_2$ are the unit balls of $\| \cdot \|_1$
and $\| \cdot \|_2$, respectively.

\begin{pro}\label{pro36}
Let $(\X,\langle \cdot,\cdot \rangle)$ be a symplectic vector space. For $i=1,2$, let $\| \cdot \|_i$ be a norm on $\X$ with unit ball $\B_i$, and with antinorm
$\| \cdot \|_{i,a}$. Let $1 \leq p \leq \infty$, and let $\| \cdot \| =  \left(\|\cdot\|_1^p+ \|\cdot\|_2^p\right)^{\frac{1}{p}}$.
Then, for every $x \in \X \setminus \{ o \}$, we have
\[
\| x \|_a \leq \min \{ \| x\|_{1,a} , \| x\|_{2,a} \} \leq \left(\| x\|_{1,a}^p+ \| x\|_{2,a}^p\right)^{\frac{1}{p}},
\]
with equality in the first inequality if, and only if, $p = \infty$, and in the second one if, and only if $p = \infty$ and $\|x\|_{1,a}=\|x\|_{2,a}$.
\end{pro}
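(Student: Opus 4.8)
The plan is to treat the two inequalities separately. The second one, with its equality case, is elementary: writing $a=\|x\|_{1,a}$ and $b=\|x\|_{2,a}$ (both positive, since antinorms are genuine norms and $x\ne o$), one has $\inf\{a,b\}\le\max\{a,b\}\le(a^p+b^p)^{1/p}$, the last step being the monotonicity of the $\ell^q$-norm on $\Re^2$ in the exponent $q$. For $p<\infty$ the inequality $\max\{a,b\}<(a^p+b^p)^{1/p}$ is strict, so $\inf\{a,b\}=(a^p+b^p)^{1/p}$ forces $p=\infty$; and when $p=\infty$ the right-hand side is $\max\{a,b\}$, which equals $\inf\{a,b\}$ precisely when $a=b$. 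This is the stated condition for the second inequality.

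For the first inequality I would combine two observations. First, $\|x\|=(\|x\|_1^p+\|x\|_2^p)^{1/p}\ge\|x\|_i$ for $i=1,2$ and every $1\le p\le\infty$, so $\B\subseteq\B_1\cap\B_2$. Second, the antinorm is antitone with respect to inclusion of unit balls: since $\langle\cdot,x\rangle$ is linear, its supremum over $\Sph$ equals its supremum over $\B$, so Definition~\ref{defn:antinorm} gives $\|x\|_a=\sup\{\langle y,x\rangle:y\in\B\}$, whence $\B\subseteq\B'$ implies $\|x\|_a\le\|x\|'_a$. Applying this with $\B'=\B_i$ gives $\|x\|_a\le\|x\|_{i,a}$ for both $i$, i.e. $\|x\|_a\le\inf\{\|x\|_{1,a},\|x\|_{2,a}\}$.

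It remains to locate the equality case in the first inequality. If $p<\infty$ then $\B\cap\bd\B_i=\emptyset$: from $\|z\|_i=1$ we get $z\ne o$, hence $\|z\|_j>0$ for the other index $j$, hence $\|z\|^p=1+\|z\|_j^p>1$. Since the symplectic form is nondegenerate, $\langle\cdot,x\rangle$ is a nonzero functional for $x\ne o$, so its maximum over the convex body $\B_i$ is attained on a face contained in $\bd\B_i$; this face is disjoint from the compact set $\B$, so $\|x\|_a=\sup_\B\langle\cdot,x\rangle<\sup_{\B_i}\langle\cdot,x\rangle=\|x\|_{i,a}$ for $i=1,2$, and the first inequality is strict. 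If $p=\infty$, then $\B=\B_1\cap\B_2$, and I would exhibit an $x\ne o$ for which equality holds: if $\B_1\subseteq\B_2$ (or the reverse) then $\B=\B_1$ and equality holds for every $x$; otherwise choose $b\in\B_2\setminus\B_1$ and put $y^\ast=b/\|b\|_1$, so $y^\ast\in\bd\B_1$ while $\|y^\ast\|_2=\|b\|_2/\|b\|_1<1$, i.e. $y^\ast\in\inter\B_2$. Using that $G\colon\X\to\X^\ast$ is onto, pick $x\ne o$ so that the supporting hyperplane of $\B_1$ at $y^\ast$ is the level set of $Gx=\langle\cdot,x\rangle$ on which $\langle\cdot,x\rangle$ attains its maximum over $\B_1$. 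Then $y^\ast\in\B_1\cap\B_2$ yields $\|x\|_a\ge\langle y^\ast,x\rangle=\|x\|_{1,a}$, and $y^\ast\in\B_2$ yields $\|x\|_{1,a}\le\|x\|_{2,a}$; together with the reverse bound $\|x\|_a\le\inf\{\|x\|_{1,a},\|x\|_{2,a}\}$ this gives equality.

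The main obstacle is the $p=\infty$ part of the first equality. One must be careful here, because for a \emph{fixed} $x$ the first inequality can be strict even when $p=\infty$ (intersecting the two balls may shrink the antinorm in every direction), so what is really proved is that equality is attainable exactly when $p=\infty$; isolating the degenerate inclusions $\B_1\subseteq\B_2$ and $\B_2\subseteq\B_1$ and constructing the witnessing $x$ via the surjectivity of $G$ is the delicate step. Everything else — antitonicity of the antinorm under ball inclusion, the disjointness $\B\cap\bd\B_i=\emptyset$ for $p<\infty$, and the $\ell^p$ comparison — is routine.
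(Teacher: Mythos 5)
Your proof is correct, and for the inequality chain itself it takes exactly the paper's route: the paper's entire proof is the one display
\[
\| x \|_ a = \sup \left\{ \langle x,y \rangle :  \left(\|y\|_1^p+ \|y\|_2^p\right)^{\frac{1}{p}} \leq 1 \right\} \leq \sup \left\{ \langle x,y \rangle :  \|y\|_i \leq 1 \right\} \leq \| x \|_{i,a},
\]
i.e.\ the same antitonicity of the antinorm under inclusion of unit balls, followed by ``from this, the assertion readily follows.'' Everything you add beyond that --- the $\ell^p$ comparison for the second inequality, the strictness for $p<\infty$ via $\B\subseteq\inter\B_i$, and the construction of a witnessing $x$ for $p=\infty$ --- is absent from the paper, and your treatment of these points is sound. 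More importantly, your caveat about the first equality case is not merely a delicacy of phrasing: read pointwise in $x$ (as the parallel condition $\|x\|_{1,a}=\|x\|_{2,a}$ for the second inequality forces one to read it), the claim ``equality iff $p=\infty$'' is simply false. Take $p=\infty$ and two transversal ellipses $\B_1,\B_2$ in the plane; for a direction in which the supremum of $\langle\cdot,x\rangle$ over $\B_1\cap\B_2$ is attained at a crossing point of the two boundaries, one gets $\|x\|_a<\inf\{\|x\|_{1,a},\|x\|_{2,a}\}$. So the correct statement is the attainability version you prove, and the same defect propagates to Proposition~\ref{pro37}(i), whose proof asserts that the supremum of a linear functional over $\bigcap_i\B_i$ equals the infimum of the suprema over the $\B_i$ --- which the same example refutes. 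In short: your proof subsumes the paper's and repairs an equality claim that the paper asserts without proof and that is false as literally stated.
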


\begin{proof}
By the definition of antinorm, for $i=1,2$, we have
\[
\| x \|_ a = \sup \left\{ \langle x,y \rangle :  \left(\|y\|_1^p+ \|y\|_2^p\right)^{\frac{1}{p}} \leq 1 \right\} \leq \sup \left\{ \langle x,y \rangle :  \|y\|_i \leq 1 \right\} \leq \| x \|_{i,a} .
\]
From this, the assertion readily follows.
\end{proof}

\section{The normality map}\label{sec:normalitymaps}

Let $\X$ be even dimensional, $[\cdot,\cdot]$ induced by a strictly convex, smooth norm,
and let $\langle \cdot, \cdot \rangle$ be a symplectic form on $\X$.
Recall the maps $F$ from Remark~\ref{rem1} defined on $\NS$, and $G$, defined in (\ref{7}) for $(\X, \langle \cdot,\cdot \rangle )$.
The main concept of this section is the following.

\begin{ddd}\label{defn:normalitymap}
The product $J = G^{-1} F$, $J : \X \rightarrow \X$ is called the \emph{normality map} of $\NS$, with respect to $\langle \cdot, \cdot \rangle$.
\end{ddd}

We remark that the normality map $J$ also appears in \cite[p. 308]{Gu} as $T$.

Using the definitions of $F$ and $G$, $J$ can be interpreted as follows: for every linear functional $f \in \X^\ast$ there are unique vectors $x,x' \in \X$ such that $f(\cdot) = [\cdot, x] = \langle \cdot, x' \rangle$. In this case $J(x) = x'$.

\begin{ex}
It is easy to check that if $\{ U,V \}$ is a polar decomposition of $\NS$, and $[\cdot, \cdot]$ is an inner product on $\X$ such that $U$ and $V$ are orthogonal, then $J(x)$ is the reflection of $x$ either about $U$ or about $V$.
\end{ex}

\begin{rem}\label{rem:notlinear}
As $G$ is linear, the linearity of $J$ implies that $F$ is linear, and thus, $J$ is linear if, and only if $(\X,\| \cdot \|)$ is a Hilbert space.
\end{rem}

In light of Remark~\ref{rem:notlinear}, we would like to emphasize that by \emph{isometry}, we mean a (not necessarily linear) transformation $Z : \X \to \X$ with the property that for every $x \in \X$, $\| Z(x)\| = \| x \|$.

Before stating our first result in this section, we set $J_a=G^{-1} F_a$, and recall that $\B_a$ and $\Sph_a$ denote, respectively,
the unit ball and the unit sphere of the antinorm.

\begin{thm}\label{pro33}
For any $x, y\in \X$ and any $\lambda\in \Re$, we have
\begin{enumerate}
\item[(i)] $\|x\|=\|J x\|_a$ and $\|x\|_a=\|J_a x \|$;
\item[(ii)]  $J \Sph =\Sph_a$ and $J \B = \B_a$;
\item[(iii)] $[x, y]=\langle x, J y\rangle$ and $[x, y]_a=\langle x, J_a y \rangle$,  where $[\cdot, \cdot]_a$ is the semi-inner product induced by $\|\cdot\|_a$;
\item[(iv)] $x\dashv J x$ and  $x\dashv_a J_a x$;
\item[(v)] $[J x , y]=-[J y, x]$;
\item[(vi)]  $[J x , y]_a=-[J_a y, x]$;
\item[(vii)]  $J(\lambda x)=\lambda J x $;
\item[(viii)] $J_a J=J J_a=-I$, where $I$ denotes the identity map of $\X$;
\item[(ix)] $[x, y]=[J y , J_a  x ]$ and $[x, J_a y ]=-[y, J_a x]$.
\end{enumerate}
\end{thm}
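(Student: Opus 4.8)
The plan is to establish the ten identities in the order listed, since many of the later ones are formal consequences of the earlier ones together with the definitions $J = G^{-1}F$, $J_a = G^{-1}F_a$, and the semi-polarity relations from Theorem~\ref{thm:doubleantinorm}. First I would record the basic compatibility facts: $F$ is norm-preserving (Remark~\ref{rem1}), $F_a$ is norm-preserving for $\|\cdot\|_a$, and $G$ is the fixed symplectic identification with $g_x(y) = \langle y, x\rangle$. Then (i) follows because $\|Jx\|_a = \|GJx\|^\ast = \|Fx\|^\ast = \|f_x\|^\ast = \|x\|$ by Definition~\ref{defn:antinorm} and Remark~\ref{rem1}, and symmetrically for $J_a$; (vii) is immediate from linearity of $F$ and $G^{-1}$ together with the homogeneity $f_{\lambda x} = \lambda f_x$; and (ii) follows from (i) and (vii) since $J$ maps $\Sph$ into $\Sph_a$ bijectively (its inverse being the corresponding map for the antinorm, which will reappear in (ix)).

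Next I would turn to (iii), which is really the geometric heart of the theorem: for all $x,y$, $[x,y] = f_y(x) = (Fy)(x) = (GJy)(x) = g_{Jy}(x) = \langle x, Jy\rangle$ directly from the definitions, and likewise $[x,y]_a = \langle x, J_a y\rangle$. From (iii) the remaining items unwind quickly. For (iv): $x \dashv Jx$ means $\|x\| \le \|x + \lambda Jx\|$ for all $\lambda$, which by property (v$'$) in Remark~\ref{rem3} is equivalent to $[Jx, x] = 0$; but $[Jx,x] = \langle Jx, Jx\rangle = 0$ by the alternating property of the symplectic form. Wait — I should double-check: (v$'$) says $[y,x]=0 \Leftrightarrow x \dashv y$, so $x \dashv Jx$ needs $[Jx, x] = 0$, and indeed $[Jx,x] = \langle Jx, Jx\rangle = 0$. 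The antinorm version $x \dashv_a J_a x$ is identical using $[\cdot,\cdot]_a$ and the antinorm analogue of (v$'$). For (v): $[Jx,y] = \langle Jx, Jy\rangle = -\langle Jy, Jx\rangle = -[Jy,x]$, purely from antisymmetry of $\langle\cdot,\cdot\rangle$. Item (vi) is the mixed analogue: $[Jx,y]_a = \langle Jx, J_a y\rangle$; I would need $\langle Jx, J_a y\rangle = -\langle J_a y, Jx\rangle = -[J_a y, x]$, using $[u,v]_a = \langle u, J_a v\rangle$ read as $\langle J_a y, x\rangle$ via $[J_a y, \cdot]$... actually more carefully: $[J_a y, x]$ — hmm, this is $[\cdot,\cdot]$ not $[\cdot,\cdot]_a$, so $[J_a y, x] = \langle J_a y, Jx\rangle$ by (iii); hence $[Jx,y]_a = \langle Jx, J_a y\rangle = -\langle J_a y, Jx\rangle = -[J_a y, x]$, as desired.

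The item I expect to be the genuine obstacle is (ix), $J_a J = J J_a = -I$, because it is the one place where the two semi-inner products $[\cdot,\cdot]$ and $[\cdot,\cdot]_a$ must be related to each other rather than just to the symplectic form, and it relies on Theorem~\ref{thm:doubleantinorm} (the double-antinorm identity $\|\cdot\|_{a,a} = \|\cdot\|$ and the symmetric perpendicularity statement). The plan here is: for $x \in \Sph$, let $y = J_a J x$; I want to show $y = -x$. By (i), $\|y\|_{?}$ computations give $\|Jx\|_a = \|x\| = 1$ and then $\|J_a(Jx)\|_{a,a}$... I would instead argue via uniqueness in property (vi) of the excerpt (the generalized Riesz representation): it suffices to show $[z, J_a J x] = [z, -x]$ for all $z$, i.e. $\langle z, J_a J_? \rangle$... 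Concretely, using (iii) twice: $[z, J_a J x]$ is not directly expressible, so instead I will use (x)'s first identity in tandem — better, I would prove (ix) and the first half of (x) together. Note $[x,y] = \langle x, Jy\rangle = -\langle Jy, x\rangle = -[Jy, J_a^{-1}x]$ is circular; the clean route is: from Theorem~\ref{thm:doubleantinorm}, $\|\cdot\|_{a,a}=\|\cdot\|$, hence the map $J_{a,a} := G^{-1}F_{a,a}$ equals $J$; but by definition applied to the antinorm, $J_{a,a}$ is the normality map of $(\X,\|\cdot\|_a)$, so running (iii) at the antinorm level gives $[u,v]_a = \langle u, J_a v\rangle$ and $[u,v]_{a,a} = \langle u, J_{a,a} v\rangle = \langle u, Jv\rangle = [u,v]$. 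Now I claim $F_a F = $ something forcing $G^{-1}F_a G^{-1} F = -I$: since $F_a$ and $F$ correspond to dual-pairing the antinorm and norm, and $G$ is alternating, the composition picks up the sign $-1$ from $\langle x,y\rangle = -\langle y,x\rangle$; I would verify this by checking $\langle J_a J x, w\rangle = -\langle x, w\rangle$ for a spanning set of $w$, using $\langle J_a J x, w\rangle = (G J_a Jx)(w) = (F_a Jx)(w) = [w, Jx]_a = \langle w, J_a Jx\rangle$ — again circular, so the decisive input must be the perpendicularity equivalence $x \perp_{\|.\|} Gy \Leftrightarrow y \perp_{\|.\|_a} Gx$ from Theorem~\ref{thm:doubleantinorm}, which pins down $J_a$ as (up to sign) the inverse of $J$ on spheres; combined with homogeneity (vii) and strict convexity/smoothness (guaranteeing uniqueness of the normalizing functional), this yields $J_a J = -I$ on all of $\X$, and $J J_a = -I$ by the symmetric argument. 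Finally (viii), $J\B = \B_a$, is then immediate from (i)+(vii)+(ii) (it maps the unit ball bijectively onto the antinorm unit ball, with inverse $-J_a$ by (ix)), and the remaining identity $[x, J_a y] = -[y, J_a x]$ in (x) follows from (iii) and (ix): $[x, J_a y] = \langle x, J J_a y\rangle = \langle x, -y\rangle = -\langle x, y\rangle$, while $[y, J_a x] = \langle y, -x\rangle = -\langle y,x\rangle = \langle x,y\rangle$, whence $[x,J_a y] = -[y, J_a x]$; and $[x,y] = [Jy, J_a x]$ follows from $[Jy, J_a x] = \langle Jy, J J_a x\rangle = \langle Jy, -x\rangle = \langle x, Jy\rangle = [x,y]$.
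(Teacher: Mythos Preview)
Your arguments for (i)--(viii) and (x) are correct and essentially match the paper's; your (iv) and (x) are in fact a bit more explicit than the paper's versions. One small slip: $F$ is not linear, only homogeneous --- but you already invoke $f_{\lambda x}=\lambda f_x$, which is all (vii) needs. For (ii), the paper obtains bijectivity of $J$ directly from $J=G^{-1}F$ with both factors bijections (the latter by the generalized Riesz theorem), rather than forward-referencing (ix) as you do.

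The substantive divergence is (ix). The paper argues that $\|J_aJx\|=\|x\|$ (from (i) and $\|\cdot\|_{a,a}=\|\cdot\|$) forces $J_aJ$ into the symmetry group of $\B$; it then rules out $+I$ in the generic case where this group is $\{\pm I\}$ via the sign in (vi), and handles general $\B$ by an approximation-and-continuity argument. Your route through the perpendicularity equivalence of Theorem~\ref{thm:doubleantinorm} is more direct and avoids approximation entirely, but your writeup leaves the two key steps vague. First, the mechanism: from $\langle x,Jx\rangle=[x,x]=\|x\|\cdot\|Jx\|_a$ one gets $x\perp_{\|\cdot\|}G(Jx)$, hence $Jx\perp_{\|\cdot\|_a}Gx$ by Theorem~\ref{thm:doubleantinorm}; but also $Jx\perp_{\|\cdot\|_a}G(J_aJx)$ by the very definition of $J_a$, and smoothness of $\|\cdot\|_a$ makes the supporting functional at $Jx$ unique, so $J_aJx=\lambda x$ with $|\lambda|=1$ by (i). Second, the sign, which you assert but never justify: compare $\langle Jx,J_aJx\rangle=[Jx,Jx]_a=\|x\|^2>0$ with $\langle Jx,x\rangle=-[x,x]=-\|x\|^2<0$ to force $\lambda=-1$. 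With these two points filled in, your proof of (ix) is complete and arguably cleaner than the paper's.
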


\begin{proof}

Let $x\overset{F}{\longmapsto} f_x \overset{G^{-1}}{\longmapsto} J x$ and $x\overset{F_a}{\longmapsto} f_x^a
\overset{G^{-1}}{\longmapsto} J_a x$. Since $G J x=f_x$ and $\|x\|_{a,a}=\|x\|$, we have
\begin{equation}\label{6}
\|J x \|_a=\|G J x\|^\ast=\|f_x\|^\ast=\|x\|,
\end{equation}
by (\ref{4}) and (\ref{5}). The second equality in (i) is implied by $G J_a x=f^a_x$.

The equality (\ref{6}) yields (ii). According to the definition of $G$ in (\ref{7}), we have $G J x =\langle \cdot, J x  \rangle$.
On the other hand, $G J x =G G^{-1}F x=F x=[\cdot, x]$.  The same  holds also for $J_a$, and thus we obtain (iii).
Setting $x=J y$ in the first equality in (iii) and  $x=J_a y$ in the second one yields (iv).

By (iii) and the skew-symmetry of $\langle \cdot, \cdot\rangle$ it follows that
\[
[J x, y]=\langle J x , J y \rangle=-\langle J y , J x \rangle=-[J y, x],
\]
which proves (v).
By a similar argument, we may obtain (vi).
%Now, (iii) implies $[J x, y]_a=\langle J x , J_a y\rangle=-\langle Ja y, J x\rangle=-[J_a y, x]$.
The homogeneity  of $[\cdot, \cdot]$ and $\langle \cdot, \cdot\rangle$ yields  (vii).

By (i) and $\|x\|_{a,a} = \|x\|$, we have $\|J_a J x \|= \|x\|$ for every $x \in \X$. Thus, $J_a J \B = \B$, which yields that $J_a J$ is contained in
the symmetry group of $\B$. Since $\B$ is $o$-symmetric, this group contains $I$ and $-I$, and possibly some other transformations.
First, consider the case that the only symmetries of $\B$ are $I$ and $-I$, which implies that $J_a J = I$ or $J_a J = -I$.
Furthermore, if $J_a J = I$, then, applying (vi) with $y=J x$ yields $[J x,J x]_a = - [x,x]$. Since
for every $x \in \X$, we have $[J x,J x]_a \geq 0$, and $[x,x] \geq 0$ with equality only for $x=o$, we have reached a contradiction, implying that $J_a J = -I$
in this case. If $\B$ has symmetries different from $I$ and $-I$, then we may approach $\B$ with a sequence of $o$-symmetric convex bodies which have no other symmetries, and apply a continuity argument. This proves the first part of (viii), whereas the second part follows from the same argument.

Finally, from (v) it follows that $[J J_a x, y]=-[J y , J_a x ]$, which, together with (viii), yields the first relation of (ix), implying
$[x, J_a y]=[J J_a y, J_a x]=-[y, J_a x]$ as well.
\end{proof}

\begin{rem}\label{rem5}
If $\NS$ is the Euclidean plane, then $J: \X \longrightarrow \X$ is simply the rotation about the origin by $\frac{\pi}{2}$.
Furthermore, if $\NS$ is two-dimensional,  then $J \Sph=\Sph_a$ is the \emph{isoperimetrix} of $\NS$ (cf. \cite{Th}).
\end{rem}

For our next theorem, we need some preparation.
First, recall the so-called `linear Darboux Theorem' (cf. \cite{arnold}) that states that
any two symplectic spaces $(\X_1, \langle \cdot, \cdot \rangle_1)$ and  $(\X_2, \langle \cdot, \cdot \rangle_2)$ of the same dimension are \emph{symplectically isomorphic}; that is, there is a linear isomorphism $L : \X_1 \to \X_2$ satisfying $\langle x,y\rangle_1=\langle L x,L y\rangle_2$ for all $x,y \in \X_1$.
Such a map is called a \emph{symplectic isomorphism} between the two spaces.
Furthermore, observe that if $(\X, \langle \cdot, \cdot \rangle )$ is a symplectic space and $L : \X \longrightarrow \X$ is a linear transformation,
then there is a unique linear transformation $L'$ satisfying $\langle L x,  y \rangle = \langle  x, L' y \rangle$, or equivalently, $\langle x, L y \rangle = \langle L' x, y \rangle$, which we call the \emph{left adjoint}
of $L$.

\begin{rem}
If, in a polar decomposition of $\X$, where $\dim \X = 2n$, the matrix of $L$ is $\left[ \begin{array}{cc} A_1 & A_2 \\ A_3 & A_4 \end{array} \right]$,
where each block is an $n \times n$ matrix, then the matrix of its left adjoint is $\left[ \begin{array}{cc} A_4^T & -A_2^T \\ -A_3^T & A_1^T \end{array} \right]$.
\end{rem}

\begin{thm}\label{pro35}
Let $\langle \cdot,\cdot\rangle_1$ and $\langle \cdot,\cdot\rangle_2$ be two symplectic forms on $\NS$, and let
$L : (\X, \langle \cdot,\cdot\rangle_1) \longrightarrow (\X, \langle \cdot,\cdot\rangle_2)$ be a symplectic isomorphism.
For $i=1,2$ and any linear transformation $A$ on $\X$, let $A_i^\ast$, $J_i$ and $\| \cdot \|_{i,a}$ denote the (left or right) adjoint of $A$, the normality map and the antinorm, respectively, with respect to $\langle \cdot,\cdot\rangle_i$. Then the following are equivalent.
\begin{enumerate}
\item[(i)] $J_1^{-1}L_2^\ast LJ_1$ is an isometry of $\NS$.
\item[(ii)] $J_2^{-1} (L^{-1})_1^\ast L^{-1} J_2$ is an isometry of $\NS$.
\item[(iii)] For any $x \in \X$, we have $\|x\|_{1,a}=\|x\|_{2,a}$.
\end{enumerate}
\end{thm}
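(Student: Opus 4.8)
The plan is to prove that conditions (i), (ii), (iii) are each equivalent to the single assertion that $J_1^{-1}J_2$ is an isometry of $\NS$, from which the theorem follows at once.

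First I would reduce (iii) to this assertion. By Theorem~\ref{pro33}(i), applied to each of the two symplectic forms, $\|J_i x\|_{i,a}=\|x\|$ for all $x\in\X$, equivalently $\|z\|_{i,a}=\|J_i^{-1}z\|$. Hence (iii) asserts that $\|J_1^{-1}z\|=\|J_2^{-1}z\|$ for every $z$; substituting $z=J_2 w$ turns this into $\|J_1^{-1}J_2 w\|=\|w\|$ for every $w$, i.e.\ $J_1^{-1}J_2$ is an isometry of $\NS$.

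Next I would express $J_2$ through $J_1$ and $L$. Since the map $F$ of Remark~\ref{rem1} depends only on the norm, $G_1 J_1=F=G_2 J_2$ (recall $J_i=G_i^{-1}F$). The relation $\langle x,y\rangle_1=\langle Lx,Ly\rangle_2$ gives $G_1=L^{\top}G_2 L$, where $L^{\top}\colon\X^\ast\to\X^\ast$ is the transpose of $L$, because $(G_1 x)(y)=\langle y,x\rangle_1=\langle Ly,Lx\rangle_2=(G_2(Lx))(Ly)$. Substituting into $G_1 J_1=G_2 J_2$ yields
\[
J_2=\bigl(G_2^{-1}L^{\top}G_2\bigr)\,L\,J_1 .
\]
Writing $M=G_2^{-1}L^{\top}G_2$, the identity $G_2 M=L^{\top}G_2$ unwinds to $\langle y,Mx\rangle_2=\langle Ly,x\rangle_2$ for all $x,y$, and by the antisymmetry and nondegeneracy of $\langle\cdot,\cdot\rangle_2$ this characterises $Mx$ as $\pm L_2^{\ast}x$, the (left or right) adjoint of $L$ with respect to $\langle\cdot,\cdot\rangle_2$. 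Hence $J_1^{-1}J_2=\pm\,J_1^{-1}L_2^{\ast}LJ_1$. Because $\B$ is $o$-symmetric, a linear map $N$ is an isometry of $\NS$ if and only if $-N$ is; therefore $J_1^{-1}J_2$ is an isometry precisely when $J_1^{-1}L_2^{\ast}LJ_1$ is, which is (i). Combined with the previous step, (i)$\Leftrightarrow$(iii).

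Finally, (ii) follows by symmetry: $L^{-1}$ is a symplectic isomorphism from $(\X,\langle\cdot,\cdot\rangle_2)$ to $(\X,\langle\cdot,\cdot\rangle_1)$, so repeating the argument with the indices $1$ and $2$ interchanged and $L$ replaced by $L^{-1}$ shows that (iii) --- which is symmetric in the indices --- is equivalent to ``$J_2^{-1}(L^{-1})_1^{\ast}L^{-1}J_2$ is an isometry of $\NS$'', i.e.\ to (ii). (Equivalently, $J_2^{-1}J_1=(J_1^{-1}J_2)^{-1}$ and the isometries of $\NS$ form a group.) The main obstacle is not conceptual but bookkeeping: in the middle step one must keep careful track of which symplectic form each adjoint is formed with respect to, verify the transpose identity $G_1=L^{\top}G_2 L$, and correctly identify $G_2^{-1}L^{\top}G_2$ with the adjoint of $L$ up to the sign forced by antisymmetry; one should also confirm that the left/right adjoint ambiguity in the statement is immaterial, which it is, since the two adjoints differ only by sign and $\B$ is $o$-symmetric.
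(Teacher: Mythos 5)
Your proof is correct and follows essentially the same route as the paper: both rest on the identity $J_2=\pm L_2^{\ast}LJ_1$ (which you obtain from $G_iJ_i=F$ and $G_1=L^{\top}G_2L$, and the paper obtains from $[y,x]=\langle y,J_ix\rangle_i$ --- the same computation) together with the fact from Theorem~\ref{pro33} that $\|J_i^{-1}\cdot\|=\|\cdot\|_{i,a}$, i.e.\ $J_i\Sph=\Sph_{i,a}$. Your reorganization through the pivot ``$J_1^{-1}J_2$ is an isometry'' is a clean way to package the same argument, and your handling of the left/right adjoint sign ambiguity matches the paper's remark that it is immaterial because $\B$ is $o$-symmetric.
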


We note that since the unit ball $\B$ of $\NS$ is $o$-symmetric, it makes no difference if, in Theorem~\ref{pro35} we mean right or left adjoint.

\begin{proof}
By symmetry, it suffices to show that (i) and (iii) are equivalent.
First, assume that (i) holds.
Then, for any $x \in \X$,
\[
\|x\|_{1,a}=\sup\left\{ \langle y,x\rangle_1: y\in \Sph \right\}=\sup\left\{ \langle
L y,L x\rangle_2: y\in \Sph \right\}=
\]
\[
=\sup\left\{ \langle y,L_2^\ast L  x \rangle_2: y\in \Sph\right\}=\|L_2^\ast  L x\|_{2,a}.
\]
Hence, $L_2^\ast  L \Sph_{1,a} =\Sph_{2,a}$, where, for $i=1,2$, $\Sph_{i,a}$ is the unit sphere of the norm $\| \cdot \|_{i,a}$.
Now, from (ii) of Theorem \ref{pro33} it follows that
\[
\Sph_{2,a}=L_2^\ast  L \Sph_{1,a}=J_1 J_1^{-1}L_2^\ast LJ_1 \Sph=J_1 \Sph=\Sph_{1,a},
\]
implying (iii).

Conversely, (iii) of Theorem~\ref{pro33} yields
\[
\langle y, J_2 x \rangle_2=[y,x]=\langle y, J_1 x\rangle_1=\langle L y, L J_1 x\rangle_2=\langle y, L_2^\ast L J_1 x\rangle_2
\]
holds for all $x,y\in \X$. Since $\langle \cdot, \cdot \rangle_2$ is nondegenerate and bilinear, from this
$J_2 x = L_2^\ast LJ_1 x$ follows for all $x \in \X$.
Now, using (ii) of Theorem \ref{pro33} and (iii), we obtain
\[
J_1 \Sph=\Sph_{1,a}=\Sph_{2,a}=J_2 \Sph = L_2^\ast L J_1 \Sph,
\]
which readily yields (i).
\end{proof}

In light of Theorem~\ref{pro35}, we may introduce a more refined classification system on symplectic forms than standard symplectic isomorphism.
Theorem~\ref{pro35} shows also that antinorms and the isometries of a normed space are related.

\begin{ddd}\label{equivalence}
Let $\langle \cdot,\cdot\rangle_1$ and $\langle \cdot,\cdot\rangle_2$ be two symplectic forms on $\NS$.
For $i=1,2$, let $\| \cdot \|_{i,a}$ denote the antinorm with respect to $\langle \cdot,\cdot\rangle_i$.
We say that the two symplectic forms are \emph{equivalent with respect to the norm $\| \cdot \|$}, if
for all $x \in \X$, we have $\| x \|_{1,a} = \| x \|_{2,a}$.
\end{ddd}

\section{The concept of semi-polarity}\label{sec:semipolar}

The concept of \emph{polarity} (or \emph{polar duality}) is a very important tool in several areas of convexity.
From a functional analytic point of view, for a real vector space $\X$, the \emph{polar} of a set $X \subset \X$ is defined
as the subset $\{f\in \X^\ast: f(x)\leq 1\;\text{for all}\;x\in X\}$ of the dual space.
Nevertheless, in geometry, this subset is usually identified with the subset
\begin{equation}\label{eq:polarinX}
X^\ast=\{y\in \X: [x, y]_{\Eu}\leq 1\;\text{for all}\;x\in X\}
\end{equation}
of $\X$, induced by an inner product $[\cdot, \cdot]_{\Eu}$ of $\X$.
This subset $X^\ast$ is also called the polar, and we use this definition in our paper.
We note that the identification in (\ref{eq:polarinX}) assumes an inner product structure on the space, as there is no canonical isomorphism
between $\X$ and $\X^\ast$.
The following theorem summarizes some of the important properties of polar sets; see,
e.g.  \cite[$\S$ 1.6 and Remark 1.7.7]{Sch}, \cite[$\S$ 2.8]{Web}, \cite[$\S$ 3]{Bo-Ma-So}, and \cite[$\S$ 4.1, p. 56]{Gru}.
Recall from Section~\ref{sec:prelim} that $\XX$ denotes the family of convex bodies in $\X$, with the origin $o$ as an interior point,
and in a normed space with unit ball $\B$, the support function of $K \in \XX$ in the direction $u \neq o$ is $h_B(K,u) = \sup \{ [x,u] : x \in K \}$, and the gauge function of $K$ is $g(K,x) = \min \{ \lambda \geq 0 : x \in \lambda K \}$, $x \in \X$. Furthermore, $\B_{\Eu}$ is the Euclidean unit ball of $\X$.

\begin{thm}\label{polarity}
Let $M, N \subset \X$, and $\lambda \neq 0$. Then
\begin{enumerate}
\item[(i)] $M\subseteq N$ implies $N^\ast\subseteq M^\ast$;
\item[(ii)] $(M\cup N)^\ast=M^\ast\cap N^\ast$;
\item[(iii)] $(\lambda M)^\ast=(1/\lambda)M^\ast$;
\item[(iv)] $\B_{\Eu}^\ast=\B_{\Eu}$.
\end{enumerate}
If $M\in \XX$, then
\begin{enumerate}
\item[(v)] $M^{\ast\ast}=M$,
\item[(vi)] $\displaystyle g(M^\ast, x)=h(M, x)$ and   $h(M^\ast, x)=g(M, x)$.
\end{enumerate}
If, in addition, $M$ is centered at $o$, then
\begin{enumerate}
\item[(vii)] $h(M, x)=\|x\|_{M^\ast}$ and $h(M^\ast, x)=\|x\|_M$ for $x\in X$, where $\|\cdot\|_N$ is the norm induced by
the $o$-symmetric convex body $N$.
\end{enumerate}
\end{thm}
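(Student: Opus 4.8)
The plan is to verify each of the seven items, mostly by unwinding the definition $X^\ast = \{ y \in \X : [x,y]_{\Eu} \leq 1 \text{ for all } x \in X \}$ and exploiting bilinearity and symmetry of the Euclidean inner product. Items (i)--(iv) are direct. For (i), if $M \subseteq N$ and $y \in N^\ast$, then $[x,y]_{\Eu} \leq 1$ holds for all $x \in N$, hence for all $x \in M$, so $y \in M^\ast$. For (ii), $y \in (M \cup N)^\ast$ iff $[x,y]_{\Eu} \leq 1$ for every $x \in M \cup N$, which splits into the two conditions $y \in M^\ast$ and $y \in N^\ast$. For (iii), $y \in (\lambda M)^\ast$ iff $[\lambda x, y]_{\Eu} = \lambda [x, y]_{\Eu} \leq 1$ for all $x \in M$, i.e. $[x, (1/\lambda) y \cdot \lambda^2 / \lambda]_{\Eu}$... more cleanly, $(\lambda M)^\ast = \{ y : \lambda [x,y]_{\Eu} \leq 1 \ \forall x \in M \} = \{ (1/\lambda) z : [x, z]_{\Eu} \leq 1 \ \forall x \in M \} = (1/\lambda) M^\ast$, using $\lambda \neq 0$ (the $o$-symmetry of the bodies in play lets us ignore the sign of $\lambda$, or one argues directly with the defining inequality). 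For (iv), the Cauchy--Schwarz inequality $[x,y]_{\Eu} \leq \|x\|_{\Eu} \|y\|_{\Eu}$ gives $\B_{\Eu} \subseteq \B_{\Eu}^\ast$, and conversely if $\|y\|_{\Eu} > 1$ then taking $x = y / \|y\|_{\Eu} \in \B_{\Eu}$ yields $[x,y]_{\Eu} = \|y\|_{\Eu} > 1$, so $y \notin \B_{\Eu}^\ast$.

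Next I would treat (vi), from which (v) and (vii) follow quickly. Fix $M \in \XX$ and $x \neq o$. Using the support-function description recalled in Section~\ref{sec:prelim} (with the Euclidean inner product in place of the semi-inner product, so that $h(M,x) = \sup\{ [z,x]_{\Eu} : z \in M \}$), one computes
\[
h(M^\ast, x) = \sup \{ [y,x]_{\Eu} : y \in M^\ast \}.
\]
I claim this equals $g(M,x)$. Indeed, for $\lambda > 0$ one has $x \in \lambda M^\ast$ iff $(1/\lambda) x \in M^\ast$ iff $[z, x]_{\Eu} \leq \lambda$ for every $z \in M$, i.e. iff $h(M, x) \leq \lambda$ — wait, this shows $g(M^\ast, x) = h(M,x)$, which is the first identity in (vi). For the second identity, the analogous computation with $M^\ast$ in place of $M$ gives $g(M^{\ast\ast}, x) = h(M^\ast, x)$, so I need $M^{\ast\ast} = M$ first; alternatively I argue directly: $h(M^\ast, x) = \sup\{ [y,x]_{\Eu} : y \in M^\ast\}$, and $y \in M^\ast$ means $[z,y]_{\Eu} \leq 1$ for all $z \in M$; by symmetry and positive homogeneity this supremum is exactly the least $\mu \geq 0$ with $x/\mu$ "admissible against $M$", which is $g(M, x)$. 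I will present (vi) as: $g(M^\ast, \cdot) = h(M, \cdot)$ by the argument just given, and then establish (v), and deduce $h(M^\ast, \cdot) = g(M^{\ast\ast}, \cdot) = g(M, \cdot)$.

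For (v), with $M \in \XX$ both $M$ and $M^{\ast\ast}$ are convex bodies containing $o$ in the interior, and by the identity $g(M^\ast, \cdot) = h(M, \cdot)$ applied twice,
\[
g(M^{\ast\ast}, x) = h(M^\ast, x) = \sup\{ [y,x]_{\Eu} : [z,y]_{\Eu} \leq 1 \ \forall z \in M \}.
\]
On one hand $M \subseteq M^{\ast\ast}$ trivially: if $z \in M$ then $[z,y]_{\Eu} \leq 1$ for all $y \in M^\ast$, so $z \in M^{\ast\ast}$. For the reverse inclusion I use that $M$ is a closed convex set containing $o$: if $x \notin M$, the separating hyperplane theorem yields a functional, represented via the inner product by some $y_0$, with $[z, y_0]_{\Eu} \leq 1$ for all $z \in M$ but $[x, y_0]_{\Eu} > 1$; thus $y_0 \in M^\ast$ witnesses $x \notin M^{\ast\ast}$. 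Hence $M^{\ast\ast} = M$. Finally (vii) is the specialization of (vi) to an $o$-symmetric $M$: then $g(M, \cdot) = \|\cdot\|_M$ by the remark $g(\B, x) = \|x\|$ in Section~\ref{sec:prelim} applied to the body $M$, and likewise $g(M^\ast, \cdot) = \|\cdot\|_{M^\ast}$, so (vi) reads $h(M^\ast, x) = \|x\|_M$ and $h(M, x) = \|x\|_{M^\ast}$.

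The only genuine obstacle is the reverse inclusion $M^{\ast\ast} \subseteq M$ in (v), which is exactly where compactness, closedness, and $o \in \inter M$ enter through the separating hyperplane theorem and the Euclidean identification $\X \cong \X^\ast$; every other step is a mechanical unwinding of definitions together with Cauchy--Schwarz. I would also remark in passing that (v)--(vii) genuinely require $M \in \XX$ (and $o$-symmetry for (vii)), unlike (i)--(iv) which hold for arbitrary subsets.
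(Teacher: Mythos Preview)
The paper does not actually prove this theorem; it is presented as a summary of well-known properties of polar sets, with pointers to standard references (Schneider, Webster, Boltyanski--Martini--Soltan, Gruber). Your proposal is correct and is precisely the standard textbook argument one finds in those sources: (i)--(iv) are immediate from the definition and Cauchy--Schwarz; the nontrivial inclusion $M^{\ast\ast}\subseteq M$ in (v) is exactly the separating-hyperplane step you give; and (vi)--(vii) are the usual gauge/support duality, which you derive in the expected order $g(M^\ast,\cdot)=h(M,\cdot)$ first, then (v), then the remaining identity.

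Two minor cosmetic points. Your treatment of (iii) wanders before you reach the clean substitution $z=\lambda y$, which by bilinearity gives $\lambda[x,y]_{\Eu}=[x,z]_{\Eu}$ and hence $(\lambda M)^\ast=(1/\lambda)M^\ast$ for all $\lambda\neq 0$; the parenthetical about $o$-symmetry is unnecessary, since this substitution already handles negative $\lambda$. Also, in deriving the second half of (vi) you momentarily try a direct argument and then (correctly) fall back on applying the first half to $M^\ast$ together with (v); in a clean write-up you would simply present that route.
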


To generalize this notion, instead of an inner product, we use the semi-inner product of $\NS$ to identify elements of $\X$ and $\X^\ast$.
Unless we specifically state, in this section we do not restrict our investigation to even dimensional spaces but consider only strictly convex, smooth norms.

\begin{ddd}\label{defn:semipolar}
Let $\NS$ be a normed space with unit ball $\B$ and semi-inner product $[\cdot, \cdot]$ and let $m \in \X$.
Then the \emph{left/right semi-polar of $m$} is
\begin{equation}\label{eq:semipolarm}
m_\circ = \{ x \in \X : [m,x] \leq 1 \}\quad \hbox{and} \quad m^\circ = \{ x \in \X : [x,m] \leq 1 \} ,
\end{equation}
respectively.
If $M \subset \X$, then the \emph{left/right semi-polar of $M$} is
\begin{equation}\label{eq:semipolarMleft}
M_\circ = \bigcap_{m \in M} m_\circ =\{ x \in \X : [m,x] \leq 1 \hbox{ for all } m \in M\}
\end{equation}
\begin{equation}\label{eq:semipolarMright}
M^\circ = \bigcap_{m \in M} m^\circ =\{ x \in \X : [x,m] \leq 1 \hbox{ for all } m \in M\} ,
\end{equation}
respectively.
\end{ddd}

We note that this definition implies $o_\circ=o^\circ = \X$. Observe that $f_x = [\cdot,x]$ is a linear functional on $\X$, but $[x,\cdot]$ is not necessarily so.
Thus, if $M \in \XX$, then $M^\circ \in \XX$ as well, but $M_\circ$ is not necessarily convex.
On the other hand, if $[\cdot,\cdot]$ is symmetric (e.g. $\NS$ is an inner product space), then both $M_\circ$ and $M^\circ$ coincide with the usual polar of $M$ in this space.

\begin{thm}\label{pro7}
Let $M,N \subset \X$ and $\lambda \neq 0$. Then
\begin{enumerate}
\item[(i)] $M\subseteq N$ implies $N^\circ\subseteq M^\circ$ and $N_\circ \subseteq M_\circ$;
\item[(ii)] $(M\cup N)^\circ=M^\circ\cap N^\circ$ and $(M\cup N)_\circ=M_\circ\cap N_\circ$;
\item[(iii)] $(\lambda M)^\circ=(1/\lambda)M^\circ$ and $(\lambda M)_\circ=(1/\lambda)M_\circ$;
\item[(iv)] $\B^\circ=\B_\circ = \B$.
\item[(v)] If $M \in \XX$, then $(M_\circ)^\circ = M$.
\end{enumerate}
\end{thm}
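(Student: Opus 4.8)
The plan is to prove the five items roughly in the order given, since (i)–(iii) are formal consequences of the definitions, and (iv), (v) require a little geometry. For (i): if $M \subseteq N$ and $x \in N^\circ$, then $[n,x] \le 1$ for all $n \in N$, in particular for all $m \in M$, so $x \in M^\circ$; the inclusion $N_\circ \subseteq M_\circ$ is identical with the roles of the two arguments of $[\cdot,\cdot]$ swapped. For (ii): $x \in (M\cup N)^\circ$ iff $[y,x]\le 1$ for all $y \in M\cup N$, which splits as ``$[y,x]\le 1$ for all $y\in M$'' and ``$[y,x]\le 1$ for all $y\in N$'', i.e. $x\in M^\circ\cap N^\circ$; again the $(\cdot)_\circ$ version is the mirror statement. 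For (iii): using the homogeneity property (i) of the semi-inner product in the first variable, $[\lambda m, x]\le 1 \iff [m,x]\le 1/\lambda$ when $\lambda>0$, and when $\lambda<0$ one uses that $\lambda M = |\lambda|(-M)$ together with $[-m,x]=-[m,x]$ and the fact that $x \in (-M)^\circ$ trades $[m,x]\le 1$ for $[-m,x]\le 1$; carefully bookkeeping signs gives $(\lambda M)^\circ=(1/\lambda)M^\circ$ for all $\lambda\ne 0$, and likewise for $(\cdot)_\circ$. (Here I should double-check the sign conventions, since the semi-polar, unlike the symmetric polar, is not symmetric under $m\mapsto -m$; but for a fixed nonzero scalar the computation is routine.)

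For (iv), the point is that $\B^\circ = \{x : [x,m]\le 1 \text{ for all } \|m\|\le 1\}$, and since $f_m=[\cdot,m]$ is a linear functional with $\|f_m\|^\ast = \|m\|$ by Remark \ref{rem2}, the condition ``$[x,m]\le 1$ for all $m\in\B$'' is equivalent to $\sup\{[x,m]:\|m\|\le 1\}\le 1$. Now $\sup\{[x,m]:\|m\|\le 1\}\ge [x,x/\|x\|]=\|x\|$, and conversely by Cauchy--Schwarz (property (iii)) $[x,m]\le \|x\|\|m\|\le\|x\|$; hence the supremum is exactly $\|x\|$, so $\B^\circ=\B$. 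For $\B_\circ$, the condition is ``$[m,x]\le 1$ for all $\|m\|\le1$''; the same Cauchy--Schwarz bound gives $[m,x]\le\|m\|\|x\|\le\|x\|$ with equality at $m=x/\|x\|$, so again $\sup\{[m,x]:\|m\|\le 1\}=\|x\|$ and $\B_\circ=\B$.

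The only genuinely non-formal part is (v), so I expect that to be the main obstacle. One inclusion is easy: for $m \in M \in \XX$, every $x \in M_\circ$ satisfies $[m,x]\le 1$, which says exactly $m \in (M_\circ)^\circ$, so $M \subseteq (M_\circ)^\circ$. For the reverse inclusion, suppose $z \notin M$; since $M$ is a convex body with $o$ in its interior, by the separation theorem there is a linear functional separating $z$ from $M$, which we may normalize so that it has the form $f_y = [\cdot, y]$ for a suitable $y$ (using the generalized Riesz representation, property (vi)), with $[m,y]\le 1$ for all $m\in M$ but $[z,y]>1$. Then $y \in M_\circ$, yet $[z,y] > 1$ shows $z \notin (M_\circ)^\circ$. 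Combining the two inclusions gives $(M_\circ)^\circ = M$. The point to watch is that $M_\circ$ need not be convex, so one cannot simply invoke a bipolar theorem; the argument above sidesteps this because $(M_\circ)^\circ$ is always an intersection of halfspaces (hence convex and closed), and we only need the right-semipolar of the possibly non-convex set $M_\circ$, which is harmless. I would also remark that the symmetric-looking identity $(M^\circ)_\circ = M$ holds by the same argument with the two slots interchanged, though the statement only asks for one of them.
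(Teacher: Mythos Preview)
Your proof of the five stated items is correct and follows essentially the same route as the paper: (i)--(iii) are unwound from the definition, (iv) uses the Cauchy--Schwarz inequality together with the choice $m=x/\|x\|$ to compute the relevant supremum, and (v) is the separation argument combined with the Riesz-type representation $f_y=[\cdot,y]$. (A minor point: in your write-up of (i)--(iii) you occasionally swap the two arguments of $[\cdot,\cdot]$ relative to the paper's convention for $M^\circ$ versus $M_\circ$; this is harmless for those formal parts, but be careful when you write it out.)

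However, your closing remark is wrong: the identity $(M^\circ)_\circ = M$ does \emph{not} follow ``by the same argument with the two slots interchanged.'' The separation step in (v) works precisely because $[\cdot,y]=f_y$ is a linear functional, so the separating hyperplane can be written as $\{z:[z,y]=1\}$, which places $y$ in $M_\circ$. If you interchange the slots you would need the separating hyperplane to be a level set of $[y,\cdot]$, but $[y,\cdot]$ is in general \emph{not} linear (indeed, the paper shows in Theorem~\ref{thm:innerproductcharacterization} that $m_\circ$ is convex for all $m$ only in inner product spaces). Accordingly, the paper does not claim $(M^\circ)_\circ=M$; it explicitly leaves this as an open question in Section~\ref{sec:remarks}. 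So drop that final sentence, or replace it with the honest observation that only the easy inclusion $M\subseteq (M^\circ)_\circ$ is available by the symmetric reasoning.
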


\begin{proof}
Note that (i)-(iii), and the equality $\B^\circ = \B$ are straightforward consequences of Definition~\ref{defn:semipolar}.

We prove that $\B_\circ = \B$.
By definition, we have $\B_\circ = \{ y \in \X : [x,y] \leq 1 \hbox{ for any } x \in \B \}$.
Since $[x,y] \leq 1$ for any $x,y \in \B$, we clearly have $\B \subseteq \B_\circ$.
On the other hand, let $y \in \X \setminus \B$. Then $\|y\| > 1$, and we have $\left[ \frac{y}{\|y\|},y\right]= \frac{1}{\|y\|} [y,y] = \|y\| > 1$.
As $\frac{y}{\|y\|} \in \B$, it follows that $y \notin \B_\circ$.

Finally, we show (v). By definition, for any $M \subset \X$, we have $M \subseteq (M_\circ)^\circ$.
Let $x \notin M \in \XX$. Then there is a hyperplane $H$ strictly separating $M$ from $x$.
Since $H$ cannot pass through $o$, using the identification $F$ in Remark~\ref{rem1}, there is some $y \in \X$ such that
$H = \{ z \in \X : [z,y] = 1 \}$.
Now, for any $z \in M$, we have $[z,y] < 1$, implying that $y \in M_\circ$.
On the other hand, $[x,y] > 1$, which yields that $x \notin (M_\circ)^\circ$ and $(M_\circ)^\circ = M$.
\end{proof}

\begin{thm}\label{thm:innerproductcharacterization}
For any $\NS$, the following are equivalent.
\begin{enumerate}
\item[(i)] $\NS$ is an inner product space;
\item[(ii)] for any $m \in \X$, $m_\circ$ is convex;
\item[(iii)] for any $m \in \X$, $m \in (m^\circ)^\circ$;
\item[(iv)] for any $m \in \X$, $m \in (m_\circ)_\circ$.
\end{enumerate}
\end{thm}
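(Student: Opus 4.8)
The plan is to prove the cycle of implications $(i) \Rightarrow (ii) \Rightarrow (iii) \Rightarrow (iv) \Rightarrow (i)$, or more efficiently, to show that each of $(ii)$, $(iii)$, $(iv)$ individually already forces symmetry of $[\cdot,\cdot]$; the implication $(i) \Rightarrow$ each of the others being trivial since in an inner product space $[\cdot,\cdot]$ is symmetric and $m_\circ = m^\circ = \{x : [x,m]_{\Eu} \leq 1\}$ is a (convex) half-space, and $(m^\circ)^\circ = (m_\circ)_\circ$ is the closed half-space on the correct side of its bounding hyperplane, which plainly contains $m$. So the content is entirely in the reverse directions.

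The key geometric observation to exploit is that $m^\circ = \{x : [x,m] \leq 1\} = \{x : f_x(m) \leq 1\}$ is, by definition of $f_x$ and the normed support function, precisely the set $\{ x : h_{\|\cdot\|}(\cdot) \text{-type condition}\}$; more concretely, $[x,m] = f_x(m)$ where $f_x$ is the (unique, by smoothness) norm-$\|x\|$ supporting functional at $x$, so $[x,m] = \|x\| \cdot (\text{value of the unit supporting functional at } x \text{ evaluated at } m)$. Thus $m^\circ$ is a genuinely convex set (indeed $f_x$ is linear in $x$ only through $F$, but $m^\circ$ is the $F$-preimage of $\{\phi \in \X^\ast : \phi(m) \leq 1\}$, a half-space in $\X^\ast$; under $F^{-1}$ this need not be convex, but it is the set of $x$ whose supporting functional $f_x$ lies in a half-space). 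On the other hand $m_\circ = \{x : [m,x] \leq 1\}$ involves $x$ only through $f_x$ again but now with $m$ in the slot being mapped: $[m,x] = f_x(m)$... wait — I must be careful: $[m,x] = f_x(m)$ and $[x,m] = f_m(x)$. So $m^\circ = \{x : f_m(x) \leq 1\}$ is literally a half-space (since $f_m$ is a fixed linear functional), hence always convex; and $m_\circ = \{x : f_x(m) \leq 1\}$ is the potentially non-convex one. With this corrected reading: $m^\circ$ is always convex, $m \in m^\circ$ trivially (as $f_m(m) = \|m\|^2 \leq 1$ fails in general — so $m \in m^\circ$ iff $\|m\| \leq 1$; the correct statement must be about $(m^\circ)^\circ$). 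So I would first carefully re-derive from Definition~\ref{defn:semipolar} that $m^\circ$ is a closed half-space through the point $\frac{m}{\|m\|^2}\cdot(\text{scaled})$, always convex, whereas $(m^\circ)^\circ = \bigcap_{x \in m^\circ} x^\circ$ need not contain $m$, and $m_\circ$ need not be convex.

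Granting the correct reading, here is the skeleton. \textbf{Step 1:} Show $(ii) \Rightarrow (i)$. If $m_\circ = \{x : f_x(m) \leq 1\}$ is convex for every $m$, parametrize: for $x \in \Sph$, $f_x$ is the unit supporting functional, so $f_x(m) \leq 1$ says $m$ lies on a certain side of the tangent hyperplane at $x$; convexity of $\bigcup_{\|x\|=t}$ of these ``good'' $x$'s for all $t$ translates, after rescaling, into a statement about the normality relation $\dashv$ and shadow boundaries being hyperplane sections — exactly the hypothesis of Theorem~\ref{thm:normalisnottransversal} / Blaschke's theorem (Theorem 10.2.3 of \cite{Sch}), forcing $\B$ to be an ellipsoid. \textbf{Step 2:} Show $(iii) \Rightarrow (i)$ and $(iv) \Rightarrow (i)$. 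For $(iii)$: $m \in (m^\circ)^\circ$ means $f_m$ evaluated appropriately is bounded by $1$ on all of $m^\circ$; unpacking, $(m^\circ)^\circ = \{y : f_y(x) \leq 1 \ \forall x \text{ with } f_m(x) \leq 1\}$, and by the separation/bipolar argument already used in Theorem~\ref{pro7}(v), $(M_\circ)^\circ = M$ for bodies $M$, the failure of $m \in (m^\circ)^\circ$ in general comes from the asymmetry $[x,m] \neq [m,x]$; requiring it for all $m$ pins down $f_x(m) = f_m(x)$ for enough pairs to conclude symmetry via property (vi) (uniqueness in the Riesz representation). \textbf{Step 3:} Assemble the equivalences, using $(i) \Rightarrow (ii),(iii),(iv)$ trivially.

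\textbf{Main obstacle.} The hard part will be Step 1 — extracting, from the bare convexity of $m_\circ$ for all $m$, a clean geometric statement about $\B$. The natural route is to identify $\partial m_\circ$ (or rather the boundary of the complement) with a union of shadow-boundary-like sets of the balls $\lambda\B$ in the direction determined by $m$, then invoke Blaschke's ellipsoid characterization exactly as in the proof of Theorem~\ref{thm:normalisnottransversal}. Making the translation between ``$\{x : f_x(m) = 1\}$ is the boundary of a convex set'' and ``for each radius, the $\dashv$-normal directions to $m$ form a hyperplane section'' requires care with the smoothness and strict convexity hypotheses (to guarantee $f_x$ is single-valued and varies continuously), and one must handle the union over all scalings $\lambda$. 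Once that reduction is in place, everything else is bookkeeping with properties (i)--(vii) of the semi-inner product and the separation argument already deployed for Theorem~\ref{pro7}(v).
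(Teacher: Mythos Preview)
Your overall strategy matches the paper's: $(i)$ implies the rest trivially, and each of $(ii)$, $(iii)$, $(iv)$ is shown separately to force the semi-inner product to be symmetric. Two points are worth flagging.

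For $(ii)\Rightarrow(i)$, you correctly aim at Blaschke's ellipsoid characterisation via shadow boundaries, but you make your own life hard by working with $m_\circ$ for a single point $m$ and trying to extract planarity of shadow boundaries from the level set $\{x:f_x(m)=1\}$ across all scalings. The paper sidesteps this ``main obstacle'' entirely: since each $m_\circ$ is convex and intersections of convex sets are convex, $M_\circ$ is convex for \emph{every} set $M$; now take $M=\{\lambda m:\lambda\in\Re\}$, a line through the origin. Then $M_\circ=\{x:[\lambda m,x]\le 1\ \forall\lambda\}=\{x:[m,x]=0\}=\{x:x\dashv m\}$, which is exactly the conic hull of the shadow boundary of $\B$ in direction $m$. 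Convexity of this cone (together with strict convexity of $\B$) forces it to be a hyperplane, and Blaschke applies immediately. No limiting argument is needed.

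For $(iii)\Rightarrow(i)$, your unpacking has an index slip: $(m^\circ)^\circ=\{y:[y,x]\le 1\ \text{for all }x\text{ with }[x,m]\le 1\}$, so $m\in(m^\circ)^\circ$ reads ``$[x,m]\le 1\Rightarrow[m,x]\le 1$'' for all $x$. From here the paper's argument is purely algebraic and shorter than what you sketch: if $[x,m]=1$ but $[m,x]=\lambda<1$ with $\lambda\neq 0$, rescale $x$ to $x/\lambda$ to get $[m,x/\lambda]=1$ while $|[x/\lambda,m]|=1/|\lambda|>1$, contradicting the hypothesis with the roles of $m$ and $x/\lambda$; the case $\lambda=0$ is handled by a small perturbation. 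Hence $[x,m]=1\Leftrightarrow[m,x]=1$, and homogeneity gives $[x,m]=[m,x]$ everywhere. No bipolar or separation argument is needed. The case $(iv)\Rightarrow(i)$ is analogous.
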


\begin{proof}
Clearly, (i) implies (ii)-(iv).

First, we show that (ii) implies (i).
Assume that (ii) holds. Then, since the intersection of convex sets is convex, we have that $M_\circ$ is convex for any $M \subset \X$.
Let $m \neq o$ arbitrary, and $M = \{ \lambda m : \lambda \in \Re \}$. Then $x \in M_\circ$ if, and only if $[m,x]=0$, or in other words,
if $x \dashv m$. Observe that the set of these points is exactly the conic hull of the shadow boundary of $\B$, in the direction of $m$.
Since $\B$ is strictly convex, or in other words, $\Sph$ does not contain a nondegenerate segment, from the convexity of $M_\circ$
it follows that $M_\circ$ is a hyperplane, passing through the origin. Thus, similarly as in the proof of Theorem~\ref{thm:normalisnottransversal},
to finish the proof it suffices to apply the result of Blaschke (cf. Theorem 10.2.3 of \cite{Sch}), stating that in this case $\B$ is an ellipsoid.

Now we prove that (iii) yields (i).
Assume that for any $m \in \X$, $m \in (m^\circ)^\circ$. Then we have $[m,x] \leq 1$ for any $x \in m^\circ$; or in other words,
$[x,m] \leq 1$ implies $[m,x] \leq 1$, for any $x,m \in \X$. We show that from this, it follows that $[x,m]=1$ and $[m,x]=1$ are equivalent.
Indeed, assume that $[x,m]=1$ and $[m,x] < 1$ for some $x,m \in \X$. Then, by the homogeneity of the second variable, there is some $\lambda > 1$ such that $[m,\lambda x ] \leq 1$, which implies $1 \geq [\lambda x, m ] = \lambda [x,m] = \lambda > 1$; a contradiction.
Hence, we have that $[x,m]=1$ and $[m,x]=1$ are equivalent, which yields, by homogeneity, that $[x,m]=[m,x]$ for any $m,x \in \X$.
Thus $[\cdot,\cdot]$ is an inner product.

To show that (iv) yields (i), we may apply a similar argument.
\end{proof}

For even dimensional spaces, Theorem~\ref{thm1} seems to be the analogue of (v) of Theorem~\ref{polarity}.

\begin{thm}\label{thm1}
Let $\NS$ be even dimensional, and let $J$ be the normality map with respect to a symplectic form $\langle \cdot,\cdot \rangle$ on $\X$.
If $M\in \XX$, then $\conv(J M)=(J_a M^\circ )^\circ$.
\end{thm}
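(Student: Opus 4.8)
The plan is to unwind both sides using the identity $[x,y] = \langle x, Jy \rangle$ from Theorem~\ref{pro33}(iii) and the corresponding identity $[x,y]_a = \langle x, J_a y \rangle$ for the antinorm, and then reduce the claim to the ordinary polarity statement (v) of Theorem~\ref{polarity} transported along the symplectic identification. First I would rewrite the right-hand side: by definition $M^\circ = \{ y \in \X : [y,m] \leq 1 \text{ for all } m \in M \}$, and using $[y,m] = \langle y, Jm \rangle$ this becomes $M^\circ = \{ y : \langle y, Jm \rangle \leq 1 \text{ for all } m \in M \}$; applying the second equation of Theorem~\ref{pro33}(iii), $[x,z]_a = \langle x, J_a z \rangle$, I would then express $(J_a M^\circ)^\circ$ — note that its outer $\circ$ should be read as the semi-polar with respect to which semi-inner product, and the statement implicitly uses that $M^\circ, (J_a M^\circ)^\circ \in \XX$ since $z \mapsto [z,m]$ is linear, so these are honest convex bodies and the first equation of Theorem~\ref{pro7}(v) applies.

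The key computational step is to show that $(J_a M^\circ)^\circ = J(M^\circ{}^\circ)$, i.e.\ that applying $J$ to a body commutes with semi-polarity up to replacing $J_a$ by $J$; concretely, for a convex body $N \in \XX$ I would prove $(J_a N)^\circ = J(N^\circ)$. Unpacking: $x \in (J_a N)^\circ$ iff $[x, J_a n] \leq 1$ for all $n \in N$, and by Theorem~\ref{pro33}(x) we have $[x, J_a n] = -[n, J_a x]$, so this reads $-[n, J_a x] \leq 1$ for all $n \in N$; since $N$ is $o$-symmetric this is equivalent to $|[n, J_a x]| \leq 1$, hence to $J_a x \in N^\circ$ (again using $o$-symmetry of $N^\circ$), i.e.\ $x \in (J_a)^{-1}(N^\circ) = -J(N^\circ) = J(N^\circ)$ by Theorem~\ref{pro33}(ix) and (vii), using once more that $N^\circ$ is $o$-symmetric. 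Applying this with $N = M^\circ$ gives $(J_a M^\circ)^\circ = J((M^\circ)^\circ)$, and by Theorem~\ref{pro7}(v) — more precisely the version of it stating $(M^\circ)^\circ = M$ for $M \in \XX$, which follows by the symmetric argument to the one proving $(M_\circ)^\circ = M$ — we get $(J_a M^\circ)^\circ = JM$.

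It then remains to compare $JM$ with $\conv(JM)$. Since $J$ is merely a (possibly nonlinear) homeomorphism of $\X$, $JM$ need not be convex even when $M$ is, so the two are not literally equal; the statement asserts that $(J_a M^\circ)^\circ$, which is manifestly convex (being a right semi-polar of a subset, hence an intersection of closed halfspaces through the defining linear functionals $[\cdot, J_a m]$), equals the convex hull of $JM$. So the honest content is: the right-hand side is convex and contains $JM$, hence contains $\conv(JM)$; conversely $\conv(JM) \supseteq (J_a M^\circ)^\circ$. For the reverse inclusion I would argue by separation: if $p \notin \conv(JM)$, pick a hyperplane strictly separating $p$ from $\conv(JM)$, realize it via $F$ as $\{ z : [z, q] = 1 \}$ with $[z,q] < 1$ on $JM$ and $[p,q] > 1$; then $J_a q$ (or rather $-J_a q = G^{-1}$-type preimage) lies in $M^\circ$ by the same adjoint manipulation as above, witnessing $p \notin (J_a M^\circ)^\circ$.

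The main obstacle I anticipate is bookkeeping the signs and the left-versus-right semi-polar throughout: $J$ and $J_a$ are genuinely non-symmetric, $J_aJ = JJ_a = -I$ rather than $+I$, and the identity $[x,J_a y] = -[y, J_a x]$ carries a crucial minus sign, so one must repeatedly invoke the $o$-symmetry of the bodies $M$, $M^\circ$, $N^\circ$ to absorb these signs (replacing $[\cdot,\cdot] \leq 1$ by $|[\cdot,\cdot]| \leq 1$). A secondary subtlety is making sure that every object to which we apply Theorem~\ref{pro7}(v) genuinely lies in $\XX$; this is fine for right semi-polars of bounded sets with $o$ in the interior, but should be checked for $M^\circ$ and $J_aM^\circ$ (the latter because $J_a$ is a homeomorphism fixing $o$, so it preserves the property of being a neighborhood of $o$, though not convexity — which is exactly why the $\conv$ is needed).
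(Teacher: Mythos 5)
Your main route has a fatal gap: the reduction to the double-polar identity $(M^\circ)^\circ = M$. This identity is false in a general smooth, strictly convex normed space; the paper's Theorem~\ref{thm:innerproductcharacterization} shows that already the pointwise inclusion $m \in (m^\circ)^\circ$ for all $m$ forces $\NS$ to be an inner product space. It does not ``follow by the symmetric argument'' from $(M_\circ)^\circ = M$, because the two slots of $[\cdot,\cdot]$ play genuinely different roles (only $[\cdot,m]$ is linear). Your intermediate lemma is also off: unwinding $x \in (J_a N)^\circ$ gives $-[n, J_a x] \leq 1$ for all $n \in N$, and a condition of the form ``$[n,y] \leq 1$ for all $n \in N$'' defines the \emph{left} semi-polar $N_\circ$, not $N^\circ$; done correctly (and with no symmetry hypothesis) one gets $(J_a N)^\circ = J(N_\circ)$, so your chain reduces the theorem to $(M^\circ)_\circ = M$ --- which the paper explicitly lists as an open question in Section~\ref{sec:remarks}. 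Finally, the repeated appeals to $o$-symmetry of $M$, $M^\circ$, $N^\circ$ to absorb signs are not available: $M \in \XX$ is only assumed to be a convex body with $o$ in its interior, not centrally symmetric.

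What does work --- and is exactly the paper's proof --- is the plan you relegate to the last step. The inclusion $J M \subseteq (J_a M^\circ)^\circ$ is a one-line consequence of Theorem~\ref{pro33}(x): for $m \in M$ and $x \in M^\circ$ one has $[J m, J_a x] = [x,m] \leq 1$, and since the right-hand side is an intersection of half-spaces it is convex, hence contains $\conv(J M)$. For the reverse inclusion your separation sketch is correct once the witness is pinned down: if $[J m, q] < 1$ for all $m \in M$ and $[z,q] > 1$, then $[J_a^{-1} q, m] = [J m, J_a J_a^{-1} q] = [J m, q] < 1$ shows $J_a^{-1} q \in M^\circ$ (not $J_a q$ or $-J_a q$), whence $q \in J_a M^\circ$ and $z \notin (J_a M^\circ)^\circ$. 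Discarding the lemma and the double-polar reduction and writing out these two steps recovers the paper's argument.
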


We note that $J M$ is not necessarily convex, even in the plane. As an example,
we can take $\|.\|$ as the $\ell_p$-norm with $p \approx \infty$, and $M$ as the unit disk of the $\ell_1$ norm.

\begin{proof}[Proof of Theorem~\ref{thm1}]
By definition, $M^\circ=\{x\in \X: [x, m]\leq 1\;\;\;\text{for all}\;\;\; m\in M\}.$ Hence, from (ix) of Theorem~\ref{pro33}, it follows that
$[J m , J_a x ]\leq 1$ holds for every $x\in M^\circ$ and every $m\in M$. Therefore $J m \in (J_a M^\circ )^\circ$, implying
$\conv (J M )\subseteq(J_a M^\circ )^\circ$.

To prove that $(J_a M^\circ )^\circ \subseteq \conv(J M)$, consider some $z\not\in \conv(J M)$.
Then there is a hyperplane $H$ strictly separating $z$ and $\conv(J M)$. Since $\conv(J M )\in \XX$, this hyperplane cannot
pass through the origin, and, using the identification $F$ of the elements of $\X$ and $\X^\ast$ in Remark~\ref{rem1},
$H = \{ x \in \X : [x,u]=1\}$ for some $u \in \X$. Then
\begin{equation}\label{16}
[z, u]>1, \;\;\;\text{and} \;\;\; [J m, u]<1\;\;\;\text{for any}\;\;\; m\in M.
\end{equation}
Now, (ix) of Theorem~\ref{pro33} implies that
\[
[J_a^{-1} u, m]=[J m, J_a J_a^{-1} u )] = [J m,u]< 1,
\]
for every $m \in M$, from which $J_a^{-1} u \in M^\circ$ and $u \in J_a M^\circ$ follows.
Thus, by (\ref{16}) we have that $z \notin (J_a M^\circ)^\circ$, which completes the proof.
\end{proof}

The next theorem shows how the gauge function of the semi-polar of a convex body is related to the normed support function of this body.

\begin{thm}\label{thm3}
Let $\NS$ be even dimensional, and let $J$ be the normality map with respect to a symplectic form $\langle \cdot,\cdot \rangle$ on $\X$.
Assume that $M, J M , J_a M^\circ\in \XX$. Then
\begin{equation}\label{18}
h_B(M^\circ, x)=g(M, x)\;\;\; \hbox{and} \;\;\;h_B(J M , x)=g(J_a M^\circ, x)
\end{equation}
for every  $x\in \X\setminus\{o\}$.
\end{thm}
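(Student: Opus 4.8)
\textbf{Proof proposal for Theorem~\ref{thm3}.}
The plan is to derive both identities from the description of the normed support function $h_B(K,u) = \sup\{[x,u] : x \in K\}$ recalled in Section~\ref{sec:prelim}, together with the properties of the normality map in Theorem~\ref{pro33} and the semi-polar relations already established. First I would prove the left-hand identity $h_B(M^\circ, x) = g(M,x)$. Writing $h_B(M^\circ, x) = \sup\{[y,x] : y \in M^\circ\}$, the defining condition $y \in M^\circ$ reads $[y,m] \leq 1$ for all $m \in M$; by homogeneity this is exactly the statement that $y \in \lambda M^\circ$ is controlled by the evaluation of $F(x)=f_x$ on $M$-scaled points. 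The cleanest route is to invoke Theorem~\ref{polarity}(vi)--(vii) in spirit but via the semi-polar: since $f_x=[\cdot,x]$ is a genuine linear functional, $h_B(M^\circ,x)$ is the support function of the convex body $M^\circ$ in the direction of $f_x$, normalized by $h(\B,\cdot)$; and the standard gauge--support duality (Theorem~\ref{polarity}(vi), applied with the identification $F$ and using $\B^\circ=\B$ from Theorem~\ref{pro7}(iv)) gives that this equals $g(M,x)$. Concretely: $h_B(M^\circ,x)=\sup\{[y,x]:y\in M^\circ\} = \min\{\mu \geq 0 : x \in \mu M\}$ because $y \in M^\circ \iff f_x(y)\le 1$ for the functionals $f_x$ ranging over $F(M)$, and $F(M)$ is exactly the polar data of $M^\circ$; then $\min\{\mu : x\in\mu M\}=g(M,x)$ by definition of the gauge.

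For the second identity, I would reduce it to the first one by applying Theorem~\ref{thm1}. Since $\conv(JM)=(J_a M^\circ)^\circ$ and support functions (hence normed support functions) of a set and its convex hull coincide, we have $h_B(JM,x)=h_B(\conv(JM),x)=h_B\bigl((J_a M^\circ)^\circ,x\bigr)$. Now apply the already-proved first identity with $M$ replaced by the body $J_a M^\circ \in \XX$: this yields $h_B\bigl((J_a M^\circ)^\circ,x\bigr)=g(J_a M^\circ,x)$, which is precisely the desired right-hand equality. The hypotheses $M, JM, J_a M^\circ \in \XX$ are exactly what is needed so that $\conv(JM)=(J_a M^\circ)^\circ$ is a valid identity between members of $\XX$ and so that all gauges and support functions in question are finite and positive away from $o$.

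The step I expect to be the main obstacle is the first identity $h_B(M^\circ,x)=g(M,x)$: unlike the Euclidean case, one must be careful that $M^\circ$ is defined through the functionals $[\cdot,m]$ (which \emph{are} linear) rather than $[m,\cdot]$, and that the normalization by $h(\B,\cdot)=\|\cdot\|^\ast\circ F^{-1}$ behaves correctly under the identification $F$. The clean way to handle this is to argue directly with the definitions: fix $x\neq o$ and set $\mu = g(M,x)$, so $x/\mu \in \bd M$ (using $M\in\XX$ and strict convexity), and there is a supporting hyperplane of $M$ at $x/\mu$, i.e.\ some $y$ with $[m,\cdot]$... rather, some $y$ with $\sup_{m\in M}[m,y] = [x/\mu, y] = 1$, forcing $y \in M^\circ$ and $[x,y]=\mu$; conversely for any $y\in M^\circ$ one has $[x,y]\le \mu$ by scaling $x/\mu \in M$. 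This pins down $\sup\{[x,y]:y\in M^\circ\}=\mu=g(M,x)$ up to the normalization, and dividing by $h_B$'s denominator, i.e.\ noting $h_B$ uses $[x,u]$ with $u=x$ and the denominator $h(\B,x)=\|x\|$... in fact since the semi-inner product formulation already gives $h_B(K,u)=\sup\{[x,u]:x\in K\}$ with no extra normalization needed, the computation $h_B(M^\circ,x)=\sup\{[y,x]:y\in M^\circ\}=g(M,x)$ is immediate once the supporting-hyperplane argument is in place. The remaining work is purely the bookkeeping of which slot of $[\cdot,\cdot]$ is linear, and a short verification that the separating/supporting hyperplanes can be written in the form $\{z : [z,y]=1\}$ via $F$, exactly as in the proof of Theorem~\ref{pro7}(v).
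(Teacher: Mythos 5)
There is a genuine gap, and it sits exactly where you yourself predicted the ``main obstacle'' would be: the direct proof of the first identity $h_B(M^\circ,x)=g(M,x)$. Your lower bound takes the supporting hyperplane of $M$ at $x/\mu$ (with $\mu=g(M,x)$) and writes it via $F$ as $\{z:[z,y]=1\}$; this yields $[m,y]\le 1$ for all $m\in M$ and $[x,y]=\mu$. But $[m,y]\le 1$ for all $m\in M$ says $y\in M_\circ$, the \emph{left} semi-polar, not $y\in M^\circ$; and what $h_B(M^\circ,x)=\sup\{[w,x]:w\in M^\circ\}$ requires is a point $w$ with $[w,m]\le 1$ for all $m$ and $[w,x]=\mu$ --- both conditions have their slots reversed relative to what the hyperplane argument produces. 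Since $[\cdot,\cdot]$ is not symmetric, this is not bookkeeping; it is the whole difficulty. Indeed, writing $M^\circ=\{w: f_m(w)\le 1 \ \hbox{for all}\ m\in M\}$ and using bipolarity in the duality between $\X$ and $\X^\ast$, one gets $h_B(M^\circ,x)=g(\conv(F(M)\cup\{0\}),f_x)$, so the first identity is essentially equivalent to $F(M)=G(JM)$ being convex, i.e.\ to the hypothesis $JM\in\XX$ --- a hypothesis your argument never invokes. Without it the identity can fail, so no slot-agnostic supporting-hyperplane argument can establish it.

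The paper proceeds in the opposite order: it proves the \emph{second} identity $h_B(JM,x)=g(J_a M^\circ,x)$ directly, using (x) of Theorem~\ref{pro33} ($[x,y]=[Jy,J_a x]$) to convert the defining inequalities of $M^\circ$, which live in the nonlinear slot, into genuine linear inequalities $[Jm,x_0]\le 1$; it then deduces the first identity by applying the second one to $J_a M^\circ$ in place of $M$, combined with Theorem~\ref{thm1} and $J_a J=JJ_a=-I$. Your reduction of the second identity to the first via Theorem~\ref{thm1} is the mirror image of this step and would be sound if the first identity were already in hand; the problem is that it is not. To repair your plan you must either prove the first identity using the convexity of $JM$ explicitly (in effect redoing the bipolarity computation in $\X^\ast$), or reverse the order of the two identities as the paper does.
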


\begin{proof}
First we show that the second equation in (\ref{18}) implies the first one.
Applying the second equation for $J_a M^\circ $ and using Theorem~\ref{thm1}, we obtain $h_B(J J_a M^\circ, x)=g(J_a J M, x)$,
which, by (viii) of Theorem~\ref{pro33}, is equivalent to $h_B(-M^\circ, x)=g(-M, x)$, and $h_B(M^\circ, x)=g(M, x)$.

Now, we prove the second equation.
Note that by our assumptions, $o$ is an interior point of $J_a M^\circ $.
Hence, for any $x \neq 0$, we may denote by $x_0$ the intersection point of
$\bd J_a M^\circ$ with the conic hull of $x$.  Let $s = J_a^{-1} x_0 \in M^\circ$.
Then, by (ix) of Theorem~\ref{pro33}, for every $m\in M$ we have
\[
1\geq [s, m]=[J_a^{-1} x_0, m] \Longrightarrow 1\geq [J m, J_a J_a^{-1} x_0]=[J m, x_0].
\]
Thus, we obtain that $h_B(J M, x_0)=\sup\{[J m, x_0]: m\in M\} \leq 1,$ which yields that
\begin{equation}\label{19}
h_B(J(M), x)=h_B \left( J M, \frac{\|x\|}{\|x_0\|}x_0 \right) \leq \frac{\|x\|}{\|x_0\|}=g(J_a M^\circ, x).
\end{equation}

On the other hand, let $0< \lambda< g(J_a M^\circ, x)$ be arbitrary. Then, by (vii) of Theorem~\ref{pro33} and (iii) of Theorem~\ref{pro7}, we have
\begin{equation}\label{20}
x\not \in \lambda J_a M^\circ =J_a(\lambda M^\circ)=J_a \left( \left( \frac{1}{\lambda} M \right)^\circ \right).
\end{equation}
Applying this for $y= J_a^{-1} x$, we obtain that $y\not\in \left( \frac{1}{\lambda} M \right)^\circ$, which yields
$\left[ y, \frac{1}{\lambda} m_0 \right]> 1$ for some $m_0\in M$. Hence, by (ix) of Theorem~\ref{pro33} and the homogeneity of $[\cdot,\cdot]$,
\[
\lambda < [J_a^{-1} x, m_0]=[J m_0, J_a J_a^{-1} x]=[J m_0, x],
\]
and therefore $h_B(J M, x)=\sup\{[J m, x]: m\in M\}> \lambda.$
Since $0< \lambda< g(J_a M^\circ, x)$ is arbitrary, it follows that $h_B(J M, x)\geq g(J_a M^\circ, x)$, which,
combined with (\ref{19}), proves the assertion.
\end{proof}

\begin{cor}\label{cor2}
If $M\in\XX$, then $h(M^\ast, x)=h_B(M^\circ, x)$ and $h(M^\ast, x) h(\B, x)=h(M^\circ, x)$.
\end{cor}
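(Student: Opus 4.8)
The plan is to obtain both identities formally, by combining the Euclidean-polar identity of Theorem~\ref{polarity}(vi), the semi-polar identity of Theorem~\ref{thm3}, and the definition of the normed support function $h_B$.

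First I would observe that, since $M\in\XX$, its Euclidean polar $M^\ast$ is again a convex body with $o$ in its interior, so that Theorem~\ref{polarity}(vi) applies and gives $h(M^\ast,x)=g(M,x)$ for every $x\in\X\setminus\{o\}$. On the other hand, the first identity of Theorem~\ref{thm3}, invoked in the even-dimensional setting where the normality map $J$ exists and the bodies $M$, $JM$, $J_aM^\circ$ may be assumed to lie in $\XX$, yields $h_B(M^\circ,x)=g(M,x)$ for every $x\neq o$. Comparing these two expressions for $g(M,x)$ gives $h(M^\ast,x)=h_B(M^\circ,x)$, which is the first assertion of the corollary.

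For the second assertion I would simply recall that, by the definition of the normed support function in Section~\ref{sec:prelim}, $h_B(M^\circ,x)=h(M^\circ,x)/h(\B,x)$ for every $x\neq o$. Multiplying through by $h(\B,x)$ and substituting the first assertion gives $h(M^\circ,x)=h(\B,x)\,h_B(M^\circ,x)=h(\B,x)\,h(M^\ast,x)$, which is exactly the second assertion.

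I do not expect any genuine obstacle: the corollary is a direct formal consequence of identities already established. The one point that calls for care is that the decisive input $h_B(M^\circ,x)=g(M,x)$ is precisely Theorem~\ref{thm3}, whose hypotheses include $JM\in\XX$, namely the convexity of $JM$; without that one has only $h_B(M^\circ,x)\le g(M,x)$ in general (recall from the remark after Theorem~\ref{thm1} that $JM$ can fail to be convex), so the corollary should be understood under the standing hypotheses of Theorem~\ref{thm3}.
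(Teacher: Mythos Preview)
Your proposal is correct and follows exactly the paper's own argument: combine $h(M^\ast,x)=g(M,x)$ from Theorem~\ref{polarity}(vi) with $h_B(M^\circ,x)=g(M,x)$ from Theorem~\ref{thm3}, then unwind the definition of $h_B$ to get the second identity. Your closing remark about the hypotheses $JM, J_aM^\circ\in\XX$ being implicitly inherited from Theorem~\ref{thm3} is a valid observation that the paper itself does not make explicit.
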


\begin{proof}
It follows from (vi) of Theorem~\ref{polarity} and (\ref{18}) that $h(M^\ast, x)=g(M, x)=h_B(M^\circ, x)$.
\end{proof}

The next corollary is an analogue of (vii) of Theorem~\ref{polarity}. We note that if $M$ is $o$-symmetric, then so are $J M$,
$J_a M$ and $M^\circ$.

\begin{cor}\label{cor3}
If $M, J M, J_a M^\circ \in \XX$ and $M$ is $o$-symmetric, then
\[
h_B(M^\circ, x)=\|x\|_M\;\;\;\text{and}\;\;\;h_B(J M, x)=\|x\|_{J_a M^\circ}.
\]
\end{cor}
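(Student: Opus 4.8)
The plan is to read off both equalities from Theorem~\ref{thm3}, after recording the routine fact that the gauge function of an $o$-symmetric member of $\XX$ is precisely the norm it induces.

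First I would recall that for every $o$-symmetric body $N\in\XX$ one has $g(N,x)=\|x\|_N$ for all $x\in\X$; this is merely the definition of the norm $\|\cdot\|_N$ attached to an $o$-symmetric convex body, and was already noted in Section~\ref{sec:prelim} for $N=\B$. Since $M\in\XX$ is $o$-symmetric by hypothesis, this gives $g(M,x)=\|x\|_M$. Combining it with the first equality of Theorem~\ref{thm3} yields $h_B(M^\circ,x)=g(M,x)=\|x\|_M$, which is the first assertion.

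For the second assertion I would first check that $J_aM^\circ$ is an $o$-symmetric body in $\XX$: membership in $\XX$ is assumed, and for $o$-symmetry note that, since $M$ is $o$-symmetric, the observation preceding the corollary shows that $M^\circ$ is $o$-symmetric, and a second application of that observation (with $M^\circ$ in the role of $M$) shows that $J_aM^\circ$ is $o$-symmetric; alternatively one can argue directly from $J_a(-x)=-J_ax$, which follows from (vii) and (ix) of Theorem~\ref{pro33}. Hence $g(J_aM^\circ,x)=\|x\|_{J_aM^\circ}$, and the second equality of Theorem~\ref{thm3} gives $h_B(JM,x)=g(J_aM^\circ,x)=\|x\|_{J_aM^\circ}$.

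Since the statement is a direct corollary of Theorem~\ref{thm3}, there is no genuine obstacle; the only point deserving a line of justification is the $o$-symmetry of $J_aM^\circ$, which, as indicated above, reduces to the $o$-symmetry of $M^\circ$ together with the homogeneity of the normality map.
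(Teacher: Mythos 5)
Your proposal is correct and follows the route the paper intends: the corollary is read off from Theorem~\ref{thm3} together with the identity $g(N,x)=\|x\|_N$ for $o$-symmetric $N\in\XX$, and the only point needing comment --- the $o$-symmetry of $J_aM^\circ$ --- is exactly what the remark preceding the corollary (plus the homogeneity of $J_a$ from Theorem~\ref{pro33}) supplies. Nothing is missing.
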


%%%%%%%%%%%%%%%%%%%%%%%%%%%%%%%%%%%%%%%%%%%%%%

\section{Remarks and questions}\label{sec:remarks}

\begin{rem}
One can attribute a geometric meaning to a symplectic form in any dimensions.
More specifically, if $\{e_1,\ldots,e_{2n}\}$ is a polar decomposition of the symplectic product $\langle \cdot,\cdot\rangle$, then $\langle
x,y \rangle$ is the sum of the areas of the projections onto the $n$ coordinate planes $\{e_i,e_{n+i}\}$ of the oriented parallelogram which $x$
and $y$ span.
\end{rem}

It is clear from Theorem~\ref{pro34} that if $\X$ is a Euclidean space, and $\langle \cdot, \cdot \rangle$ has a polar decomposition into an orthonormal basis of $\X$, then, with respect to this form, $\| \cdot \|_a = \| \cdot \|$. On the other hand, it is easy to see that these two norms are not even proportional
for \emph{each} symplectic form for any norm. Note that, for normed spaces, the counterpart of an orthogonal basis is a so-called \emph{Auerbach basis}, which is a basis containing pairwise normal unit vectors with respect to the norm.
This leads to the following question.

\begin{problem}
Prove or disprove that if $\| \cdot \|_a = \| \cdot \|$ with respect to any symplectic form with a polar decomposition into an Auerbach basis of $\NS$, then $\NS$ is Euclidean.
\end{problem}

\begin{problem}
Characterize the norms $\| \cdot \|$ satisfying $\| \cdot \|_a = \| \cdot \|$ with respect to some symplectic form.
\end{problem}

Note that the normality map $J$ depends on the choice of the symplectic form $\langle\cdot, \cdot\rangle$ on $\X$.

\begin{ques}
Let $\NS$ be of dimension $2n > 2$. Do there exist symplectic forms with respect to which $J \Sph$ is the isoperimetrix of $\NS$
in the sense of Busemann or Holmes-Thompson?
\end{ques}

For these two concepts of isoperimetrices see, e.g. Chapter 5 of \cite{Th}, or \cite{Ma-Mu}.

%\begin{rem}\label{rem:polar_rotation}
%Let $\NS$ be the Euclidean plane. Then Theorem \ref{thm1} implies the following property of polar bodies
%which also can be  checked very easily in a direct way: if $M$ is a convex body and the polar $M^\ast$ of $M$ is rotated about the origin by $\frac{\pi}{2}$,
%then the polar of the rotated body is the body obtained from $M$ by the same rotation.
%\end{rem}

\begin{ques}
We have shown in Theorem~\ref{pro7} that for any $M \in \XX$, we have $(M_\circ)^\circ = M$.
Clearly, $M \subseteq (M^\circ)_\circ$ also holds. Is it true that $M = (M^\circ)_\circ$?
\end{ques}

The requirements that the underlying normed space $(\X, \|\cdot\|)$  is smooth, strictly convex and of even dimension are not necessary
for the definition of semi-polarities; these requirements  are only  needed for the purpose that the normality map $J$ is well defined.

\begin{ques}
Is there a counterpart of Theorem~\ref{thm1} for odd dimensional spaces?
\end{ques}

%%%%%%%%%%%%%%%%%%%%%%%%%%%%%%%%%%%%%%%%%%%%%%%%%%%%%%%%%%%%%%%%%%%%%%%%%%%%%%%%%%%%%%%%


\begin{thebibliography}{99999}

\bibitem{Al-To}
D. Aliprantis and  R. Tourky, \emph{Cones and Duality}, Graduate Studies in Mathematics \textbf{84}, Amer. Math. Soc., Providence, Rhode Island, 2007.

\bibitem{arnold}
V. Arnold and A. Givental, \emph{Symplectic geometry}, Dynamical Systems IV, Symplectic Geometry and its Applications (Arnold, V., Novikov, S., eds.), Encyclopaedia of Math. Sciences \textbf{4}, Springer-Verlag, Berlin-Heidelberg-New York, 1990.

\bibitem{Bo-Ma-So}
V. Boltyanski, H. Martini and P. Soltan, \emph{Excursions into Combinatorial Geometry}, Springer-Verlag, Berlin-Heidelberg-New York, 1997.

\bibitem{Bus}
H. Busemann, \emph{The isoperimetric problem in the Minkowski plane}, Amer. J. Math. \textbf{69} (1947), 863–-871.

\bibitem{Cha-Groe}
G.D. Chakerian and H. Groemer, \emph{Convex bodies of constant width}, In: Convexity and Its Applications, Eds. P. M. Gruber and J. M. Wills,
Birkh\"{a}user, Basel, 1983, pp. 49--96.

\bibitem{Silva}
A.C. da Silva, \emph{Lectures on Symplectic Geometry}, Lecture Notes in Math. \textbf{1764}, Springer-Verlag, Berlin-Heidelberg-New York, 2001.

\bibitem{Dragomir}
S.S. Dragomir,  \emph{Semi-inner Products and Applications}, Nova Science Publishers, Inc., Hauppauge, NY, 2004.

\bibitem{gardner}
R.J. Gardner, \emph{Geometric Tomography}, Cambridge University Press, 1995.

\bibitem{Gi}
J.R. Giles, \emph{Classes of semi-inner-product spaces}, Trans. Amer. Math. Soc. \textbf{123} (1967), 436--446.

\bibitem{Gru}
P.M. Gruber,  \emph{Convex and Discrete Geometry}, Springer-Verlag, Berlin-Heidelberg-New York, 2007.

\bibitem{Gru2}
P.M. Gruber, \emph{Normal bundles of convex bodies}, Adv. Math. \textbf{254} (2014), 419--453.

\bibitem{Gu}
H. Guggenheimer, \emph{Pseudo-Minkowski differential geometry}, Ann. Mat. Pura Appl. \textbf{70} (1965), 305–-370.

\bibitem{GHo}
\'A. G.Horv\'ath, \emph{On the shadow boundary of a centrally symmetric convex body}, Beitr. Algebra Geom. \textbf{50} (2009), 219--233.

\bibitem{Ho}
\'A. G.Horv\'{a}th, \emph{Semi-indefinite inner product and generalized Minkowski spaces}, J. Geom. Phys. \textbf{60}(9) (2010), 1190--1208.

\bibitem{Ko}
D.O. Koehler, \emph{A note on some operator theory in certain semi-inner-product spaces}, Proc. Amer. Math. Soc. \textbf{30} (1971), 363--366.

\bibitem{La}
Z. L\'{a}ngi, \emph{On diagonalizable operators in Minkowski spaces with the Lipschitz property}, Linear Algebra  Appl. \textbf{433} (2010), 2161--2167.

\bibitem{Lu}
G. Lumer, \emph{Semi-inner-product spaces}, Trans. Amer. Math. Soc. \textbf{100} (1961), 29--43.

\bibitem{Ma-Mu}
H. Martini and Z. Mustafaev, \emph{On unit balls and isoperimetrices in normed spaces}, Colloq. Math. \textbf{127} (2012), 133--142.

\bibitem{Ma-Sw1}
H. Martini, H. and K.J. Swanepoel, \emph{The geometry of Minkowski spaces - a survey. Part II}, Expo. Math. \textbf{22} (2004), 93--144.

\bibitem{Ma-Sw2}
H. Martini and K.J. Swanepoel, \emph{Antinorms and Radon curves}, Aequationes Math. \textbf{71} (2006), 110--138.

\bibitem{Ma-Sw-Wei}
H. Martini, K.J. Swanepoel and G. Weiss, \emph{The geometry of Minkowski spaces - a survey. Part I}, Expo. Math. \textbf{19} (2001), 97--142.

\bibitem{Sch}
R. Schneider,  \emph{Convex Bodies: the Brunn-Minkowski Theory}, Encyclopedia of Mathematics and its Applications \textbf{44}, Cambridge University Press, Cambridge, 1993.

\bibitem{Th}
A.C. Thompson, \emph{Minkowski Geometry}, Encyclopedia of Mathematics and its Applications \textbf{63}, Cambridge University Press, Cambridge, 1996.

\bibitem{Web}
R. Webster, \emph{Convexity}, Oxford University Press, New York, 1994.

\end{thebibliography}
\end{document}